\documentclass[a4paper, 12pt]{amsart}

\usepackage[all,cmtip]{xy}
\usepackage{amssymb}
\usepackage{amsfonts}
\usepackage{amsthm}
\usepackage{amsmath}
\usepackage{amscd}
\usepackage{color} 
\usepackage{hyperref}
\usepackage{fullpage}


\usepackage{graphicx}
\usepackage[all]{xy}
\usepackage{array}
\usepackage{youngtab}


\newcommand{\abs}[1]{\lvert#1\rvert}

\newcommand{\li}{ { \,|\,   }  }

\newcommand{\bbC}{{\mathbb {C}}}

\newcommand{\bbG}{{\mathbb {G}}}

\newcommand{\bbN}{{\mathbb {N}}}

\newcommand{\bbQ}{{\mathbb {Q}}}

\newcommand{\bbZ}{{\mathbb {Z}}}

\newcommand{\sfA}{{\mathsf {A}}}

\newcommand{\sfF}{{\mathsf {F}}}
\newcommand{\sfG}{{\mathsf {G}}}
\newcommand{\sfH}{{\mathsf {H}}}

\newcommand{\sfL}{{\mathsf {L}}}

\newcommand{\sfS}{{\mathsf {S}}}
\newcommand{\sfT}{{\mathsf {T}}}
\newcommand{\sfU}{{\mathsf {U}}}

\newcommand{\sfX}{{\mathsf {X}}}
\newcommand{\sfY}{{\mathsf {Y}}}
\newcommand{\sfZ}{{\mathsf {Z}}}

\newcommand{\CF}{{\mathcal {F}}}

\newcommand{\CS}{{\mathcal {S}}}

\newcommand{\rmF}{{\mathrm {F}}}
\newcommand{\rmG}{{\mathrm {G}}}

\newcommand{\rmM}{{\mathrm {M}}}

\newcommand{\rmO}{{\mathrm {O}}}

\newcommand{\rmQ}{{\mathrm {Q}}}
\newcommand{\rmR}{{\mathrm {R}}}

\newcommand{\rmV}{{\mathrm {V}}}

\newcommand{\val}{{\mathrm{val}}}

\newcommand{\Hom}{{\mathrm{Hom}}}

\newcommand{\GL}{{\mathrm{GL}}}

\newcommand{\rank}{{\mathrm{rank}}}

\newcommand{\od}{\operatorname{d}}

\newcommand{\und}{\underline}
\newcommand{\incl}{\hookrightarrow}

\newcommand{\Z}{\mathbb{Z}}
\newcommand{\C}{\mathbb{C}}

\newcommand{\la}{\langle}
\newcommand{\ra}{\rangle}

\newcommand{\be}{\begin{equation}}
\newcommand{\ee}{\end{equation}}
\newcommand{\bt}{\begin{theorem}}
\newcommand{\et}{\end{theorem}}
\newcommand{\bd}{\begin{definition}}
\newcommand{\ed}{\end{definition}}
\newcommand{\bp}{\begin{proposition}}
\newcommand{\ep}{\end{proposition}}
\newcommand{\bl}{\begin{lemma}}
\newcommand{\el}{\end{lemma}}
\newcommand{\bco}{\begin{corollary}}
\newcommand{\eco}{\end{corollary}}
\newcommand{\br}{\begin{remark}}
\newcommand{\er}{\end{remark}}
\newcommand{\bex}{\begin{example}}
\newcommand{\eex}{\end{example}}
\newcommand{\ben}{\begin{enumerate}}
\newcommand{\een}{\end{enumerate}}
\newcommand{\bc}{\begin{cases}}
\newcommand{\ec}{\end{cases}}

\newcommand{\bpf}{\begin{proof}}
\newcommand{\epf}{\end{proof}}

\newcommand{\bma}{\begin{bmatrix}}
\newcommand{\ema}{\end{bmatrix}}

\setlength{\textheight}{8.6in} \setlength{\textwidth}{35pc}
\setlength{\topmargin}{-.1in} \setlength{\oddsidemargin}{.25in}
\setlength{\evensidemargin}{\oddsidemargin}

\theoremstyle{Theorem}

\theoremstyle{Theorem}

\theoremstyle{Theorem}

\theoremstyle{Definition}

\newtheorem{theorem}{Theorem}[section]

\newtheorem{lemma}[theorem]{Lemma}
\newtheorem{proposition}[theorem]{Proposition}
\newtheorem{corollary}[theorem]{Corollary}

\newtheorem{remark}[theorem]{Remark}

\theoremstyle{definition}
\newtheorem{definition}[theorem]{Definition}

\newtheorem{example}[theorem]{Example}

\begin{document}

\title{The poles of Igusa zeta integrals and the unextendability of semi-invariant distributions}

\author{Jiuzu Hong\\ With an appendix joint with Shachar Carmeli}
\address{Jiuzu Hong\\
Department of Mathematics, University of North Carolina at Chapel Hill, CB 3250 Phillips Hall Chapel Hill, NC 27599-3250, U.S.A.}
\email{jiuzu@email.unc.edu}
\address{ 
Shachar Carmeli\\
Faculty of Mathematics and Computer Science, Weizmann Institute of Science, POB 26, Rehovot 76100, Israel.}
\email{shachar.carmeli@gmail.com}

\keywords{Invariant distribution,  localization principle, meromorphic
continuation, zeta integral}

\maketitle
\begin{abstract}
We investigate the relationship between the poles of Igusa zeta integrals and the unextendability of semi-invariant distributions. Under some  algebraic conditions,  we  obtain an upper bound for the order of the poles of Igusa zeta integral, and by using the order of the poles we  give a criterion on the unextendability of  semi-invariant distributions.  A key ingredient of our method is  the idea of generalized semi-invariant distributions. 
\end{abstract}


\section{Introduction}
Following Bernstein-Zelevinsky \cite{BZ}, we define an $\ell$-space
to be a topological space which is  Hausdorff, locally compact,
totally disconnected and second-countable.  An $\ell$-group is a
topological group whose underlying topological space is an
 $\ell$-space. Let $G$ be an $\ell$-group acting  on an $\ell$-space $X$.  Let $D(X)^{\chi}$ be the space of $\chi$-invariant distributions on $X$
 \[ D(X)^{\chi}:=\Hom_G(\CS(X),\chi) ,\]
where $\CS(X)$ is the space of Schwartz functions on $X$ and $G$ acts on $\CS(X)$ naturally. Here a character $\chi$ of $G$ is a continuous homomorphism $\chi: G\to  \bbC^\times$, where we take the discrete topology on $\bbC^\times$. 

  If $X$ is a $G$-homogeneous space, then by Frobenius reciprocity it is easy to determine the space $D(X)^{\chi}$.  It is either $1$-dimensional or $0$.  When it is nonzero, we say $\chi$ is $\bold{admissible}$ on $X$.    If $X$ is not a homogeneous space,  it is in general a  difficult question to determine the space $D(X)^{\chi}$.  Let $D(X)^{\chi,\infty}$ be the space of generalized $\chi$-invariant distributions on $X$ (see definition in Section \ref{hom_space}). Let $\Lambda_G$ be the quotient of $G$ by the normal subgroups generated by all compact open subgroups.   The action of $\Lambda_G$ on $D(X)^{\chi,\infty}$ determines the space $D(X)^{\chi}$ of $\chi$-invariant distributions on $X$.  This point of view originated in \cite{HS} and it continues to play important role in this paper.  
  
Let $\rmF$ be a non-archimedean local field of characteristic zero with the normalized norm $\abs{\cdot}$. It was proved  in Tate's thesis \cite{Ta} that for any character $\chi$ of $\rmF^\times$,  the space $D(\rmF)^{\chi}$ is of dimension $1$.  When $\chi$ is nontrivial, the proof is easy. When $\chi$ is trivial, the proof can be sketched as follows. 
We  attach a zeta integral 
\[ \int_{\rmF}  \phi(x)  \abs{x}^s  \frac{dx}{\abs{x}}, \]
for any Schwartz function $\phi$ on $\rmF$. It has a simple pole at $s=0$.  Taking all coefficients of the Laurent expansion of the zeta integral at $s=0$ and considering the natural action of $\rmF^\times$ on the space of generalized invariant distributions, we can conclude that the only invariant distribution on $\rmF$ is the delta distribution which is supported at the origin.

 
  For  the applications of invariant distributions in representation theory of $p$-adic group and number theory,  it is  important to know whether a semi-invariant distribution can be  extended to a semi-invariant distribution on the whole space, for example see a recent work by Gourevitch-Sahi-Sayag \cite{GSS}. The work in \cite{HS} focuses on the question when generalized semi-invariant distributions can be extended. In this paper we pursue the investigation of the extension problem of semi-invariant distributions following the ideas and techniques of generalized semi-invariant distributions developed in \cite{HS}. Distinctively in this paper we focus on the unextendability of semi-invariant distributions, i.e. when a semi-invariant distribution can not be extended. 

Let $\sfX$ be an algebraic variety over $\rmF$ and let $f$ be a regular function on $\sfX$. Let $\mu$ be an algebraic measure on $\sfX_f(\rmF)$ where $\sfX_f$ is the open subvariety of $\sfX$ defined by $f$. We can attach a zeta integral $Z_{f,\mu}(\phi)$ for any Schwartz function on $\sfX(\rmF)$,
\[Z_{f,\mu}(\phi)=\int_{\sfX_f(\rmF) }    \phi(x)\abs{f}^s d\mu. \]
 
 In general $Z_{f,\mu}$  absolutely converges when $\Re(s)\gg 0$ and has meromorphic continuation.  We impose an action of a linear algebraic group $\sfG$ over $\rmF$ on $\sfX$ and assume $f$ and $\mu$ are semi-invariant.  We investigate the relationship between the poles of $Z_{f,\mu}$ and the unextendability of $\mu$ to $\sfX(\rmF)$ as a semi-invariant distribution.  The principle is that the order of the pole of zeta integral controls the unextendability of semi-invariant distribution.  We state the main result in Theorem \ref{Main}. In the main theorem, we impose some algebraic conditions on a stratification of $V(f)=\sfX(\rmF)\backslash \sfX_f(\rmF)$. We obtain a bound on the order of the pole, which relies on the stratification, and we also obtain an unextendability criterion based on the order of the pole.  The above example  on p-adic line in Tate thesis  serves as the most basic example.

   When the Schwartz function $\phi$ is the characteristic function of $\sfX(\rmR)$ where $\rmR$ is the ring of integers in $\rmF$, the monodromy conjecture by Igusa (cf.\cite{Ig1,De2} predicts that  the poles of Igusa zeta function $Z_{f,\mu}(\phi)$ are closely related to the b-function $b_f$ associated to $f$  (introduced by Bernstein\cite{Be1}). More precisely   if $s_0$ is a pole of $Z_{f,\mu}(\phi)$, then the real part $\Re(s_0)$ is a root of $b_f$.  In fact when $\sfX$ is a prehomogeneous space, it  has been well understood (cf.\cite{KSZ}),  and the b-functions for prehomogeneous spaces are also very computable (cf.\cite{Ki,SKKO}).   
Very often the orders of the poles and their locations are computable, and therefore in many cases one can determine whether the semi-invariant distributions can be extended or not. 

  
  In Section \ref{main_thm}, we first define several notions including standard Igusa  zeta integrals and fiberizable spaces,  and then  we state the main theorem (Theorem \ref{Main}), and  give some corollaries (Corollary \ref{cor_1},\ref{cor_2} ).  Section \ref{proof_theorem} is devoted to the proof of Theorem \ref{Main}.  In Section \ref{hom_space}, we first review the basics of generalized semi-invariant distributions and then we determine the space of generalized semi-invariant distributions on algebraic homogeneous space in  Proposition \ref{key}.  In Section \ref{loc_section},  we use  equivariant $\ell$-sheaves  to prove a version of  localization principle for generalized semi-invariant distributions (Proposition \ref{loc_act}, \ref{act_fiber} ).   In Section \ref{non-deg-pairing}, we prove the existence of a non-degenerate pairing between the lattice of algebraic characters of a connected linear algebraic group $\sfG$ over $\rmF$ and the lattice $\Lambda_{\sfG(\rmF)}$ in Proposition \ref{non_deg}.   In Section  \ref{inv-section} we use the invariance of Igusa zeta integral to prove  Proposition \ref{residue}.   Combining all these preparations, the proof of the main theorem is concluded in Section \ref{final}.
  
  As an illustration of using Theorem \ref{Main} and its corollaries, we compute all semi-invariant distributions on the space $X=\rmF^n\times \rmF^n$ with an action of the group $G=\rmF^\times \times \GL_n\times \rmF^\times$. As a consequence we show that for any character $\chi$ of $G$, the space of $\chi$-invariant distributions on $X$ is at most 1-dimensional (Theorem \ref{multiplicity_one}).  There are different phenomenons when  $n=1$, $n=2 $ and $n\geq 3$. 
 When $n=1$, our main theorem is not  applicable when $\chi$ is trivial and we need  new arguments in Lemma \ref{case3lem}.  When $n=2$, the main result of this paper is very crucially used.

In  Appendix \ref{appendix}, we relate the residues of meromorphic intertwining operators to connecting maps in the Ext-groups long exact sequences. We give an explicit description for the first connecting map in terms of residues (Theorem \ref{Main_Theorem}).   Using the idea of generalized homomorphisms we  give a criterion for the triviality of the $1$-cocycles constructed from residues (Theorem \ref{Criterion_Non_Van}).   This is a homological algebra perspective on the unextendability of invariant distributions. 

{\bf Acknowledgements}: 
J.\,Hong would like to thank  B.\,Sun for the collaboration in \cite{HS} and many discussions on the computation of the examples in Section \ref{ex_sect}. He  would like to thank the hospitality of Weizmann institute of Science, where the part of the work was done during his visit in June  2017. 
 J.\,Hong is partially supported by the Simons Foundation collaboration grant 524406. 
 
 He would like to thank the anonymous referee for very careful reading and valuable comments which greatly improved the exposition of the paper.

   \section{Main theorem}
\label{main_thm}
Let $\rmF$ be a non-archimedean local field of characteristic zero and let $\rmR$ be the ring of  $p$-adic integers in $\rmF$ with a uniformizer $\pi$. Let $q$ be the cardinality of the residue field of $\rmF$ and let $\abs{\cdot }$ the normalized norm on $\rmF$, i.e.  $\abs{\pi}=q^{-1}$.

 As usual, by an algebraic variety over $\rmF$, we mean a scheme over $\rmF$ which is separated, reduced, and of finite type. A linear algebraic group over $\rmF$ is a group scheme over $\rmF$ which is an affine variety as a scheme.

 Let $\mathsf X$ be an  algebraic variety over $\rmF$. Let $\sfZ(f)$ be the subvariety of $\sfX$ defined by $f=0$.   Let $\sfU$ be the complement of $\sfZ(f)$ in $\sfX$.
 The set $\sfX(\rmF)$ of $\rmF$-rational points  in $\sfX$ is naturally an $\ell$-space.    Given a measure $\mu$ on $\sfU(\rmF)$ (in the sense of \cite[Definition 5.8]{HS}), for any $\phi\in \CS(\sfX(\rmF))$
we associate to it a zeta integral
$$Z_{f,\mu}(\phi):=\int_{\sfU(\rmF)} \phi(x) \abs{ f(x) } ^s d\mu .$$

\bd
We say that the zeta integral $Z_{f,\mu}$ is standard at $s=s_0$ if 
\ben
\item for any $\phi\in \CS(\sfX(\rmF))$,  $Z_{f,\mu}(\phi)$ absolutely converges when $s\gg 0$, and $Z_{f,\mu}(\phi)$ admits a meromorphic continuation to the whole complex plane.
\item there exists an integer $n_0\in \bbN$ such that for any $\phi\in \CS(\sfX(\rmF))$,  $(s-s_0)^{n_0}Z_{f,\mu}(\phi)$ is analytic at $s=s_0$. 
\een
\ed
It is clear that $Z_{f,\mu}$ is standard at $s=s_0$ if and only if $Z_{f,\abs{f}^{s_0}\mu}$ is standard at $s=0$. 
If $Z_{f,\mu}$ is standard at $s=s_0$, we define the order of the pole of $Z_{f,\mu}$ at $s=s_0$ to be  $n_0$, if $(s-s_0)^{n_0}Z_{f,\mu}(\phi)$ is analytic for any $\phi\in \CS(\sfX(\rmF))$ and $(s-s_0)^{n_0-1}Z_{f,\mu}(\phi_0)$ has pole at $s=s_0$ for some $\phi_0\in \CS(\sfX(\rmF))$.  In particular the order of the pole of $Z_{f,\mu}$ at $s=s_0$ is always greater or equal than the order of the pole of $Z_{f,\mu}(\phi)$ at $s=s_0$ for any $\phi\in \CS(\sfX(\rmF))$.

 Igusa zeta integrals have been studied intensely  by Igusa\cite{Ig2}, Denef\cite{De} and many others. In general the zeta integral $Z_{f,\mu}(\phi)$ is a rational function in $q^{-s}$. In \cite[Theorem 5.13]{HS}  a general version of absolute convergence and meromorphic continuation in the setting of semi-algebraic $\ell$-space is proved. 
 \vspace{5pt}

Let $\sfG$ be  a connected linear algebraic group $\sfG$ over $\rmF$ acting on  the variety $\sfX$ over $\rmF$.  The  $\ell$-group $\sfG(\rmF)$ acts on the $\ell$-space $\sfX(\rmF)$ continuously.

An algebraic character $\nu$ of $\sfG$ over $\rmF$ is a homomorphism $\nu:  \sfG\to  \bbG_m$ of linear algebraic groups over $\rmF$ where $\bbG_m$ is the split multiplicative group over $\rmF$.   
 Let $\Psi_\sfG$ be the lattice of all algebraic characters of $\sfG$ over $\rmF$.  
 A regular function $f$ on $\sfX$ is $\nu$-invariant where $\nu$ is an algebraic character of $\sfG$ over $\rmF$, if 
\[ f(g\cdot x)=\nu(g)f(x)  \quad  \text{ for any }  g\in \sfG(\bar{\rmF}), x\in  \sfX(\bar{\rmF}) ,  \]
where $\bar{\rmF}$ is an algebraically closure of  $\rmF$.

 \begin{definition}
 We say an algebraic character $\nu$ of $\sfG$ is admissible on a homogeneous space $\sfG(\rmF)/\sfH(\rmF)$ for some algebraic subgroup $\sfH$ of $\sfG$ if the restriction of $\nu$ on $\sfH$ is trivial.
 \end{definition}
For any $\sfG(\rmF)$-orbit $\rmO$ in $\sfX(\rmF)$, we define $\Psi_{\rmO}$ to be a subgroup of $\Psi_\sfG$ consisting of all admissible algebraic characters of $\sfG$ on $\sfG(\rmF)\cdot x$, where $x\in \rmO$.  The group $\Psi_\rmO$ does not depend on the choice of $x\in \rmO$.

Given any algebraic character $\nu$ of $\sfG$ over $\rmF$, we denote by $\abs{\nu}$ the associated character of $\sfG(\rmF)$, i.e. $\abs{\nu}(g):=\abs{\nu(g)}$ for any $g\in \sfG(\rmF)$.

\bd
\label{fiberified}
Let $X$ be an $\ell$-space with an action of an $\ell$-group $G$.  We say $X$ is  $\bold{fiberizable}$ if there exists an $\ell$-space $Y$ and a morphism $h: X\to Y$ of $\ell$-spaces such that $h(g\cdot x)=h(x)$ for any $g\in G$, and for any $y\in Y$ the fiber $h^{-1}(y)$ is a disjoint union of finitely many closed  $G$-orbits. 
\ed

\br
Let $\sfX$ be a variety over $\rmF$ with an action of a linear algebraic group $\sfG$ over $\rmF$.  If $\sfX$ has a geometric quotient $\pi: \sfX\to \sfY$ (see the definition in \cite{MFK}), then $\sfX(\rmF)$ is  fiberizable.  The reason is that for any $y\in \sfY(\rmF)$,  the fiber $\psi^{-1}(y)$ consists of finitely many closed $\sfG(\rmF)$-orbits by the finiteness of Galois cohomology (see \cite[Section 6.4, Corollary 2 and Section 3.1, Corollary 2]{PR}).
\er

The main result of this paper is the following theorem.
\bt
\label{Main}
Let $\sfX$ be an algebraic variety over $\rmF$ with an action of a connected linear algebraic group $\sfG$ over $\rmF$. 
Let $f$ be a $\nu$-invariant regular function on $\sfX$ where  $\nu$ is  an algebraic character of $\sfG$ over $\rmF$.  Let  $V(f)$ be the zero set of $f$ on   $\sfX(\rmF)$ and put $U=\sfX(\rmF) \backslash V(f)$. 
Let  $\mu$ be a $\chi$-invariant measure on $U$ where $\chi$ is a character of $\sfG(\rmF)$.
Assume that $Z_{f,\mu} $ is standard at $s=0$, moreover assume that there exists a filtration $\O=V_{-1}\subset  V_0\subset V_1\subset  \cdots V_{k-1}\subset V_k=V(f)$ of closed $\sfG(\rmF)$-stable subsets of $V(f)$ such that
\ben
 \item   for each $i$,  all $\chi$-admissible orbits in $V_i\backslash V_{i-1}$ are contained in a fiberizable $G$-stable locally closed subset  of $V_i\backslash V_{i-1}$;
 \item for each $i$,  $\nu \not\in \Psi_i\otimes_{\bbZ}\bbQ$ where $\Psi_i$ is the sublattice of $\Psi_{\sfG}$ spanned by  $\Psi_\rmO$ for all $\chi$-admissible orbits $\rmO$ in $V_i\backslash V_{i-1}$. 
  \een
 
Then  $Z_{f,\mu}$ has pole at $s=0$ of order $\leq  \ell_\chi$, where 
\[ \ell_\chi:=\sharp  \{  i \li   V_i\backslash V_{i-1}  \textrm{  contains at least one  } \chi\text{-admissible orbit}     \} .\]
  Moreover, if $\ell_\chi \geq 1$ and  the order of the pole of  $Z_{f,\mu}$ at $s=0$ is exactly equal to $\ell_\chi$, then  $\mu$ can not be extended to  $\sfX(\rmF)$ as a $\chi$-invariant distribution.
\et
In the above theorem, if $Z_{f,\mu}$ is standard at $s=s_0$ and we replace $\chi$ by $\chi\abs{\nu}^{s_0}$ in the conditions, then $Z_{f,\mu}$ has a pole at $s=s_0$ of order $\leq \ell_\chi$ and if the order is exactly  $\ell_\chi$, then $\abs{f}^{s_0}\mu$ can not be extended to a $\abs{\nu}^{s_0}\chi$-invariant distribution on $\sfX(\rmF)$. 

 By Theorem \ref{Main}, the order of the pole of $Z_{f,\mu}$ at $s=0$ is bounded by $\ell_\chi$. If we can find a Schwartz function $\phi$ on $\sfX(\rmF)$ such that the order of the pole of $Z_{f,\mu}(\phi)$ at $s=0$ is equal to $\ell_\chi$, then  the order of the pole of $Z_{f,\mu}$ at $s=0$ is equal to $\ell_\chi$.

Theorem \ref{Main} will be proved in Section \ref{proof_theorem}.  We first give some remarks on the conditions in Theorem \ref{Main}.
 
\br
\ben
\item  Given any variety $\sfX$ defined over $\rmF$ with an action of a linear algebraic group $\sfG$ defined over $\rmF$, by a theorem of Rosenlicht ( see \cite{Ro}, \cite[Theorem 4.4, p.187]{PV} ),  there always exists a  filtration of $\sfG$-stable closed subvarieties $\sfX_0\subset \sfX_1\subset \cdots \subset \sfX_k=\sfX$ such that for each $i$, $\sfX_i\backslash \sfX_{i-1}$ has geometric quotient. Hence for each $i$,  $\sfX_i(\rmF)\backslash  \sfX_{i-1}(\rmF)$ is fiberizable.
\item  For each $i$, if $V_i\backslash V_{i-1}$ contains  finitely many $\chi$-admissible orbits and there is no closure relation among these orbits, then Condition (1) of  the theorem is satisfied.  
\item  Let $\rmO$ be a $\chi$-admissible $\sfG(\rmF)$-orbit in $\sfX(\rmF)$.    Let $\Xi_\rmO$ be the set of all admissible characters of $\sfG(\rmF)$ on $\rmO$. Then $\Psi_\rmO$ acts on $\Xi_\rmO$, i.e. for any $\chi\in \Xi_\rmO$,  $|\beta|^{s}\chi\in \Xi_\rmO$ for any $\beta\in \Psi_\rmO$ and $s\in \bbC$. In particular if the set $\Xi_\rmO$  consists of only countablely many admissible characters of $\rmG(\rmF)$, then $\Psi_\rmO$ is trivial.  
\een
\er

The following corollary is an immediate consequence of Theorem \ref{Main}.
\bco
\label{cor_1}
With the same setup as in Theorem \ref{Main},
assume that $Z_{f,\mu} $ is standard at $s=0$, moreover we assume that 
\ben
 \item   all $\chi$-admissible orbits in $V(f)$ are contained in  $Y$ where $Y$ is a fiberizable $\sfG(\rmF)$-stable  locally closed subset of $V(f)$.
 
 \item  $\nu \not\in \Psi_{V(f)} \otimes_{\bbZ}\bbQ$ where $\Psi_{V(f)}$ is the sublattice of $\Psi_\sfG$ spanned by  $\Psi_\rmO$ for all $\chi$-admissible orbits $\rmO$ in $V(f)$. 
  \een
 Then $Z_{f,\mu}$ has at most a simple pole at $s=0$, and $Z_{f,\mu}$ has a pole at $s=0$  if and only if $\mu$ can not be extended to a $\chi$-invariant distribution on $\sfX(\rmF)$.
\eco

\bex

We consider the following action of $\rmG:={\rm GL}_n(\rmF)\times {\rm GL}_n(\rmF)$ ($n\geq 1$) on the  space ${\rm M}_{n}:={\rm M}_{n,n}(\rmF)$ of $n\times n$-matrices with coefficients in $\rmF$:
$$(g_1,g_2)\cdot x:=g_1 x g_2^{-1}, \qquad \text{ } g_1\in {\rm GL}_n(\rmF),\, g_2\in {\rm GL}_n(\rmF), \,x\in {\rm M}_{n}. $$
Let $\rm O_r$ denote the set of rank $r$ matrices in ${\rm M}_{n,n}$, which is a $\rmG$-orbit. The matrix space $\rmM_{n}$ is union of $\rmO_r, r=0,1,\cdots, n$.
Let $\od \!x$ denote the Haar measure on $\rmM_{n}$  and for any character $\chi$ of $\rmF^\times$, we associate the following zeta integral  
\[  Z_{\det, \chi(\det)\!\od\!x } (\phi)=\int_{\rmM_{n}}\phi(x) \abs{\det}^s \chi(\det)\! \od\!x ,  \quad  \text{ for any } \phi\in \CS(\rmM_{n}).  \]
It is well-known that $Z_{\det, \chi(\det)\!\od\!x } $ is standard everywhere.  The Haar measure $\od\!x$ is $(\abs{\cdot}^{n},\abs{\cdot}^{-n})$-invariant,  and $\chi(\det)\!\od\!x $ is $(\chi\abs{\cdot}^{n}, \chi^{-1}\abs{\cdot}^{-n})$-invariant, where we denote any character of $\rmG$ by $(\chi_1,\chi_2)$ for some characters $\chi_1,\chi_2$ of $\rmF^\times$ via
\[(g_1,g_2)\mapsto  \chi_1(\det(g_1))\chi_2(\det(g_2)), \quad \text{ for any } g_1,g_2\in \GL_n.\]
  The computation (cf.\cite[Chapter 10.1]{Ig})
$$\int_{{\rm M}_n(\rmR)}  |{\rm det}(x)|^s \od\!x=\prod_{i=1}^n  \frac{1-q^{-i}}{1-q^{-i-s}} $$
shows that when $\chi=\abs{\det}^{-i}, i=1,2,\cdots, n$, $Z_{\det, \chi(\det)\!\od\!x }$ has pole at $s=0$. Note that when $r<n$, $\rmO_r$ is the only  $(\abs{\cdot}^{r}, \abs{\cdot}^{-r})$-admissible orbit and the lattice $\Psi_{\rmO_r}$ of admissible algebraic characters on $\rmO_r$ is trivial.  Corollary \ref{cor_1} immediately implies that for any $r=0,1,\cdots, n-1$,  $Z_{\det, \abs{\det}^{r-n} \!\od\!x }$ has simple pole at $s=0$ and the $(\abs{\cdot}^{r}, \abs{\cdot}^{-r})$-invariant distribution  $\abs{\det}^{r-n} \!\od\!x $ on  $\rmO_n$ can not be extended to $\rmM_{n}$ as a $(\abs{\cdot}^{r}, \abs{\cdot}^{-r})$-invariant distribution.   The only $(\abs{\cdot}^{r}, \abs{\cdot}^{-r})$-invariant distribution on $\rmM_{n}$ is obtained from the residue of $Z_{\det, \abs{\det}^{r-n} \!\od\!x }$ at $s=0$. For any $\chi\neq \abs{\cdot }^r$, the $(\chi,\chi^{-1})$-invariant distribution on $\rmM_{n}$ are obtained from the extension of $\chi(\det) \abs{\det}^{-n}  \!\od\!x $ on $\rmO_n$ via analytic continuation.
Therefore $\dim D(\rmM_n)^{\chi,\chi^{-1}}=1$ for any character $\chi$ of $\rmF^\times$.  In particular Tate's thesis on the p-adic line is a special case.   This example has been computed in \cite[Section 7]{HS} by using generalized semi-invariant distributions.  With the help of Theorem \ref{Main} and Corollary \ref{cor_1}, the arguments can be greatly simplified.  
 \eex
We  emphasize that the lattice $\Psi_i$ in Theorem \ref{Main} or $\Psi_{V(f)}$ in Corollary \ref{cor_1} might not be trivial, see the example in Proposition \ref{case_2} in Section \ref{ex_sect}.

From Theorem \ref{Main}, we can get many more corollaries by adjusting the conditions. The following is another example.

\bco
\label{cor_2}
With the same setup as in Theorem \ref{Main},  assume that $Z_{f,\mu} $ is standard at $s=0$. Moreover, we assume that   
\ben
 \item there are finitely many $\chi$-admissible orbits in $V(f)$,
 \item     $\nu\not \in \Psi_{\rmO}\otimes_\bbZ \bbQ $ for any $\chi$-admissible orbit $\rmO$ in $V(f)$.
 \een  
Then the order of the pole of $Z_{f,\mu}$ at $s=0$ is  bounded by the number of $\chi$-admissible orbits in $V(f)$.

\eco
\bpf
First of all we note that any $\sfG(\rmF)$-orbit $\rmO$ is locally closed.  We may label all $\chi$-admissible orbits in $V(f)$ as $\rmO_1,\rmO_2,\cdots, \rmO_k$, such that for each $1\leq i\leq k$, $\rmO_i$ is not contained in the closure  of any orbit of $\rmO_1,\cdots, \rmO_{i-1}$.  Set $V_k=V(f)$, $V_1=\bar{\rmO}_1$ and for any $2\leq r\leq k-1$, set 
\[V_{r}= (\bar{\rmO}_{r+1}\backslash \rmO_{r+1})\cup ( \cup_{i=1}^{r} \bar{\rmO}_i),\]
where $\bar{\rmO}_i$ is the closure of the orbit $\rmO_i$ for each $i$.  It gives a filtration of $\sfG(\rmF)$-stable closed subsets $\emptyset =V_0\subset V_1\subset \cdots  \subset V_k=V(f)$ where for each $1\leq i\leq k$,  $V_i\backslash V_{i-1}$  contains exactly one $\chi$-admissible orbit $\rmO_i$.  Therefore the corollary follows from Theorem \ref{Main}.  \epf

\section{Proof of main Theorem }
\label{proof_theorem}
   In this section, we are devoted to prove  Theorem \ref{Main}. 
   
   \subsection{Generalized semi-invariant distributions on homogeneous spaces}
   \label{hom_space}
   
Given an $\ell$-space $X$ with the action of an $\ell$-group $G$, let $S(X)$ be the space of Schwartz functions on $X$, i.e. locally constant $\bbC$-valued function with compact support on $X$.  We define the action of $G$ on $S(X)$ as follows,
 \[(g\cdot \phi)(x)=\phi(g^{-1}\cdot x),\quad   \text{  for any } g\in G, \, \phi\in S(X), \text{ and  } x\in X.\]  
 It gives a left action of $G$ on $S(X)$.
 Let $D(X)$ denote the space of distributions on $X$, i.e. all linear functionals on $S(X)$.  We define the $\bold{right}$ action of $G$ on $D(X)$ via
 \[  (\xi\cdot g)(\phi)=\xi(g\cdot \phi),\quad  \text{ for any } g\in G,\, \xi\in D(X) \text{ and } \phi\in S(X)  .\]

Given a character $\chi$ of $G$, 
we denote by $D(X)^{\chi,k}$ the space consisting of distributions $\xi$ on $X$ such that 
\[  \xi\cdot (g_1-\chi(g_0))(g_1-\chi(g_1))\cdots (g_k-\chi(g_k))=0,\quad   \text{for any } g_0,g_1,\cdots, g_k\in G .   \]

Put
\[D(X)^{\chi,\infty}:= \bigcup_k D(X)^{\chi,k}
\]
Any distirbution $\xi$ in $D(X)^{\chi,\infty}$ is called a generalized $\chi$-invariant distribution on $X$, and any  distribution $\xi\in D(X)^{\chi,k}$ is called a generalized $\chi$-invariant distribution of order $\leq k$.  For any $k\in \bbN$,  the space $D(X)^{\chi, k}$ still carries the action of $G$. Any generalized $\chi$-invariant distribution of order $\leq 0$ is equivalent to be $\chi$-invariant.
 For any $g\in G$, the operator $g-\chi(g)$ acts on $D(X)^{\chi,\infty}$ nilpotently.

Let $G^\circ$ be the subgroup of $G$ generated by all open compact subgroups of $G$.   $G^\circ$ is nomral in $G$.
We put
   \[ \Lambda_G:=G/G^\circ. \]

 Let $J_{G,k}=\bbC[\Lambda_G]/  (I_G)^{k+1}$, where $I_G$ is the augmentation ideal of $\bbC[\Lambda_G]$, i.e. 
 \[  I_G:=\{ \sum_{g\in \Lambda_G}  a_g g\in \bbC[\Lambda_G]|   \sum_{g\in \Lambda_G} a_g =0     \}. \]

Note that $J_{G,k}$ carries a natural action of $G\times G$. The following lemma follows from  \cite[Lemma 2.6]{HS}.
\bl 
\label{Lem_2.6}
$D(X)^{\chi,k}=\Hom_G(S(X)\otimes J_{G,k}, \chi)$, where $G$ acts on $J_{G,k}$ from the left.
\el

\bl
\label{comp}
Given a representation $V$ of a compact group $K$, any generalized $\chi$-invariant vector is $\chi$-invariant. 
\el
\bpf
It follows from the complete reducibility of representation of compact group.
\epf

In the rest of this subsection, we will determine all generalized semi-invariant distributions on algebraic homogeneous spaces.

Let $\sfG$ be a connected linear algebraic group over $\rmF$. Let $\sfH$ be an algebraic subgroup of $\sfG$.   In the rest of this subsection, we always use $G$ to denote $\sfG(\rmF)$ and use $H$ to denote $\sfH(\rmF)$. 

The following lemma is well-known (cf. \cite[Prop.7.2.1]{Ig}).
\bl
\label{adm_lem}
For any character $\chi$ of $G$, the homogeneous space
  $G/H$ is $\chi$-admissible if and only if
   \[
\chi|_{H}=\abs{\Delta_\sfG} \cdot \abs{\Delta_{\sfH}} ^{-1},
\]
where the algebraic modular character $\Delta_{\sfG}$ is given by  the $1$-dimensional representation of $\sfG$ on $\wedge^{\rm top} \mathfrak{g}$, where $\mathfrak{g}$ is the Lie algebra of $\sfG$ with the adjoint action of $\sfG$, and the algebraic character $\Delta_{\sfH}$ is defined similarly.
\el

It is clear that $D(G/H)^{\chi,\infty}\neq 0$ if and only if $\chi$ is admissible on $G/H$. 
When the character $\chi$ of $G$ is trivial on the unipotent radical of $G$,
the following proposition is an immediate consequence of  \cite[Theorem 6.15]{HS}.   In fact the condition on the character $\chi$ can be removed. We give the proof for general case here.

\bp
\label{key}
Assume that $G$ is connected and that $X=G/H$ is $\chi$-admissible. Then any $\xi\in D(X)^{\chi,\infty}$ can be written as
\[   
\xi= P(\val\circ \alpha_1, \val\circ \alpha_2,\cdots, \val \circ \alpha_r) \mu,
\]
where $P$ is a polynomial in $r$ variables, $\alpha_1,\alpha_2,\cdots, \alpha_r\in \Psi_{G/H}$ ( recall that $\Psi_{G/H}$ is the group of admissible algebraic characters of $\sfG$ on $G/H$),   and $\mu$ is the $\chi$-invariant distribution on $X$. 
\ep

\bpf

Let $\xi$ be any generalized $\chi$-invariant distribution on $X$ of order $\leq k$.  Choose an open compact subgroup $K$ of $G$.  We decompose $X$ as the union of $K$-orbits, 
\[ X=\sqcup_{i} X_i .\]
Then the distribution $\xi|_{X_i}$ on $X_i$ is a generalized $\chi|_{K}$-invariant. In view of Lemma \ref{comp}, it is automatically $\chi|_K$-invariant.   Hence there exists a constant number $\alpha_i$ such that $\xi|_{X_i}=\alpha_i \mu|_{X_i}$.  It follows that $\xi=f\cdot \mu$, where $f:X\to \bbC$ is a locally constant function on $X$ which is generalized invariant of order $\leq k$ with respect to the action of $G$.  

By pulling back to $G$, $f$ can be viewed  as a generalized invariant locally constant function on $G$ which is trivial on $H$.  In view of Lemma \ref{comp}, $f$ can further descend to a generalized invariant function on $\Lambda_G$ which is trivial with respect to the action of $H$.  By \cite[Prop. 6.8, Lemma 6.12]{HS}, 
$f$ can be written as a polynomial in 
$\val\circ \alpha_1, \val\circ \alpha_2,\cdots, \val \circ \alpha_r$ of degree $\leq k$, for some algebraic characters $\alpha_1,\alpha_2,\cdots, \alpha_r$ of $G$ which are trivial on $H$.  It finishes the proof of the proposition. 
\epf

\bco
\label{hom_action}
Assume that $X=G/H$ is $\chi$-admissible.  Let $g$ be an element in $G$.  If for any algebraic character $\nu\in \Psi_X$, $|\nu(g)|=1$, then $g$ acts on $D(X)^{\chi,\infty}$ by $\chi(g)$.
\eco
\bpf

If $\chi$ is not admissible on $X$, then $D(X)^{\chi,\infty}=0$.  The corollary trivially holds. 
Otherwise, it is an immediate consequence of  Proposition \ref{key}.

\epf

\subsection{A localization principle for generalized semi-invariant distributions}
  \label{loc_section} 
Let $X$ be an $\ell$-space. We define an $\ell$-sheaf on $X$ to be a
sheaf of complex vector spaces on $X$. For any $\ell$-sheaf $\CF$ on
$X$, let $\Gamma_c(X,\CF)$ denote the space of all  global sections of
$\CF$ with compact support. In particular,
$S(X)=\Gamma_c(X,\C_X)$, where $\C_X$ denotes the sheaf of
locally constant $\C$-valued functions on $X$. For each $x\in X$,
denote by $\CF_x$ the stalk of $\CF$ at $x$; and for each $s\in
\Gamma_c(X,\CF)$, denote by $s_x\in \CF_x$ the germ of $s$ at $x$.
The  set $\bigsqcup_{x\in X} \CF_x$ carries a unique topology  such
that for all $s\in \Gamma_c(X,\CF)$, the map
  \[
    X\rightarrow \bigsqcup_{x\in X} \CF_x, \quad x\mapsto s_x
  \]
is an open embedding. Then $\Gamma_c(X,\CF)$ is naturally identified
with the space of all compactly supported continuous sections of the
map $\bigsqcup_{x\in X} \CF_x\rightarrow X$.

Let $G$ be an $\ell$-group acting continuously on an
$\ell$-space $X$.

\begin{definition}( \cite[Section 1.17]{BZ})
A $G$-equivariant $\ell$-sheaf on $X$ is an $\ell$-sheaf $\CF$ on
$X$, together with a continuous group action
\[
  G\times \bigsqcup_{x\in X} \CF_x\rightarrow \bigsqcup_{x\in X} \CF_x
\]
such that for all $x\in X$, the action of each $g\in G$ restricts to a
linear map $\CF_x\to \CF_{g.x}$.
\end{definition}

 Given a $G$-equivariant $\ell$-sheaf $\CF$ on $X$, the space $\Gamma_c(X,\CF)$ is a smooth representation of $G$ so that
\[
  (g.s)_{g.x}=g.s_x \quad \textrm{for all } g\in G,\, x\in X,  \,s\in \Gamma_c(X,\CF).
\]
For each $G$-stable locally closed subset $Z$ of $X$, the
restriction $\CF|_Z$ is clearly a  $G$-equivariant $\ell$-sheaf on
$Z$.

We define the space of distributions on $\CF$ as the dual of $\Gamma_c(X,\CF)$, i.e. 
 \[ D(X,\CF):=\Gamma_c(X,\CF)^* . \]
   Similar to the right action of $G$ on $D(X)$,  we have a right action of $G$ on $D(X,\CF)$. For any character $\chi$ of $G$. Let $D(X,\CF)^{\chi,k}$ be the space of generalized $\chi$-invariant distributions of order $\leq k$, for $k=0,1,\cdots$, and put
   \[ D(X,\CF)^{\chi, \infty}=\bigcup_{k=0}^\infty D(X,\CF)^{\chi,k} .\] 
   For each $k$, $D(X,\CF)^{\chi,k}$ carries a right  action of $G$ which is locally finite.

Recall the $G\times G$-module  $J_{G,k}$ in Section \ref{hom_space}. Similar to Lemma \ref{Lem_2.6}, we have the following lemma.
\bl For any $k$, 
\label{a_Lem_2.6}
$D(X,\CF)^{\chi,k}=\Hom_G(\Gamma_c(X,\CF)\otimes J_{G,k}, \chi)$, where   $G$ acts on  $J_{G,k}$ from the left.
\el

\bl
\label{loc_lemma}
Let $X$ be an $\ell$-space and let $G$ be an $\ell$-group acting trivially on $X$. Fix elements $g_1,g_2,\cdots, g_n$ in $G$. For any $G$-equivariant $\ell$-sheaf $\CF$ on $X$, if $(g_1-1)(g_2-1)\cdots (g_n-1)$ acts on $\CF_x$ by zero for any $x\in X$, then $(g_1-1)(g_2-1)\cdots (g_n-1)$ also acts by zero on $\Gamma_c(X,\CF)$.
\el
\bpf
The lemma is immediate, since
the space $\Gamma_c(\CF)$ can be identified with all compactly supported continuous sections of the map  $\bigsqcup_{x\in X} \CF_x\rightarrow X$.
\epf

\bp
\label{loc_act}
Let  $p: X\to Y$ be a $G$-equivariant morphism of $\ell$-spaces such that $G$ acts on $Y$ trivially.  Let $\CF$ be a $G$-equivariant $\ell$-sheaf  $\CF$ on $X$. For any character $\chi$ of $G$ and elements $g_1,g_2,\cdots,g_n\in G$,   if  $(g_1-\chi(g_1))(g_2-\chi(g_2))\cdots (g_n-\chi(g_n))$ acts on $D(p^{-1}(y), \CF )^{\chi,\infty}$  by zero for any $y\in Y$, then  $(g_1-\chi(g_1))(g_2-\chi(g_2))\cdots (g_n-\chi(g_n))$ acts on $D(X, \CF)^{\chi,\infty}$  also by zero.

\ep
\bpf
 It suffices to show that for any $k$,  the action of $(g_1-\chi(g_1))(g_2-\chi(g_2))\cdots (g_n-\chi(g_n))$ on $D(X,\CF)^{\chi,k}$ is zero. 

First of all in view of Lemma \ref{a_Lem_2.6},  we have the following natural isomorphisms of $G$-modules
\[  D(X,\CF)^{\chi,k}=(\Gamma_c(X,\CF_{\chi,k})_G)^*,\]
and
\[   D(p^{-1}(y),\CF|_{p^{-1}(y)})^{\chi,k}=(\Gamma_c(p^{-1}(y),  \CF_{\chi,k}|_{p^{-1}(y)})_G)^*  \]
for any $y\in Y$ and $k=0,1,\cdots$.
    Here  $\CF_{\chi,k}:=\CF \otimes \chi^{-1} \otimes J_{G,k}$ is naturally a $G\times G$-equivariant sheaf, where the first copy of $G$ acts diagonally on $\CF$,  $\chi^{-1}$   and  $J_{G,k}$  from the left, and the second copy of $G$ acts individually on the right of $J_{G,k}$.   

By a theorem of Bernstein-Zelevinsky (cf.\cite[Proposition 2.36]{BZ}),  there exists an $\ell$-sheaf $(\CF_{\chi,k})_G$ on $Y$ such that
   \[  \Gamma_c(Y, (\CF_{\chi,k})_G)= \Gamma_c(X,   \CF_{\chi,k} )_G ,  \]
  and for any $y\in Y$,
  \[ ( (\CF_{\chi,k})_G) _y =\Gamma_c(p^{-1}(y),   \CF_{\chi,k}|_{p^{-1}(y) })_G, \]
  where the coinvariants are taken with respect to the diagonal action of $G$. 
  
 The $\ell$-sheaf $ (\CF_{\chi,k})_G$ still carries the action of the second copy of $G$, which acts on $Y$ trivially.   Moreover the right $G$-actions on $D(X,\CF)^{\chi,k}$ and  $D(p^{-1}(y), \CF|_{p^{-1}(y)} )^{\chi,k}$ exactly comes from the action of the second copy of $G$ on $(\CF_{\chi,k})_G$.  By Lemma \ref{loc_lemma},  the proposition follows.  
\epf

The following is a version of localization principle on generalized invariant distributions. 
\bp
\label{act_fiber}
Let $X$ be an $\ell$-space with action of $G$. Let $\pi: X\to Y$ be a $G$-equivariant continuous map of $\ell$-spaces, where $G$ acts on $Y$ trivially.  Assume that for any $y\in Y$,  $\pi^{-1}(y)$ is a disjoint union of finitely many closed $G$-orbits. Given a character $\chi$ of $G$ and elements $g_1,g_2,\cdots, g_n\in G$, if  $(g_1-\chi(g_1))(g_2-\chi(g_2))\cdots (g_n-\chi(g_n))$ acts on $D(\rmO)^{\chi,\infty}$ by zero for any orbit $\rmO$ in $X$, then $(g_1-\chi(g_1))(g_2-\chi(g_2))\cdots (g_n-\chi(g_n))$ acts on $D(X)^{\chi,\infty}$ also by zero.
\ep
\bpf
For any $y\in Y$,  since $\pi^{-1}(y)$ is a disjoint union of finitely many closed $G$-orbits, we have
 \[D(\pi^{-1}(y))^{\chi,\infty} =\bigoplus_{\rmO} D(\rmO)^{\chi,\infty}, \]
 where the summation is taken over $G$-orbits in $\pi^{-1}(y)$. 
 By assumption $(g_1-\chi(g_1))(g_2-\chi(g_2))\cdots (g_n-\chi(g_n))$ acts on $D(\pi^{-1}(y))^{\chi,\infty}$ by zero. Now one can  easily see that this proposition follows from Proposition \ref{loc_act}.
\epf

We recall the following localization principle of Bernstein-Zelevinsky on the vanishing of invariant distributions, which will be used throughout the paper. 
\begin{theorem}(\cite[Theorem 6.9]{BZ} )
\label{Localization}
Suppose that the action $\gamma$ of an $\ell$-group $G$ on an $\ell$-space $X$ is constructible, i.e. the set $\{(x,g\cdot x  )\,|\,   g\in G, x\in X  \}$ is a union of finite many locally closed subsets of $X\times X$. 
Given a character $\chi$ of $G$, if there are no nonzero $\chi$-invariant distributions on any $G$-orbit in $X$, then there are no non-zero $\chi$-invariant distributions on $X$.
\end{theorem} 
   
   \begin{remark}
   \label{Constructibility}
\begin{enumerate}
\item Given an algebraic action of a linear algebraic group  $\sfG$ over $\rmF$ on an algebraic variety $\sfX$ over $\rmF$,  the induced action of $\sfG(\rmF)$ on $\sfX(\rmF)$ is constructible (cf. \cite[Theorem 6.15]{BZ}). 
\item  If an action of $\ell$-group $G$ on an $\ell$-space $X$ is constructible, then on any $G$-stable locally closed subset $Z$ of $X$, the action of $G$  remains constructible. 
 \end{enumerate}  
  \end{remark}
\subsection{A non-degenerate pairing of lattices}
   \label{non-deg-pairing}
   
Let $\sfG$ be a connected linear algebraic group defined over  $\rmF$. 
 Let
\[
  \Psi_ \sfG:=\Hom( \sfG,  \bbG_m),
\]
be the group of all algebraic characters on $ \sfG$.
Here $\bbG_m$ denotes the multiplicative group of $\rmF$. Then $\Psi_\sfG$ is a lattice, i.e. a finitely generated free abelian group. Write $\mathrm e_\sfG$ for its rank.  When $\sfG$ is reductive, ${\rm e}_\sfG$ is equal to the rank of the split central torus. 

Define a  map
\be \label{pg}
  \sfG(\rmF)\times \Psi_\sfG\rightarrow \Z,\qquad (g, \nu)\mapsto \val(\nu(g)),
\ee
where $\val$ is the valuation map on the non-archimedian field $\rmF$.  Recall that $\Lambda_{\sfG(\rmF)}$ is the quotient of $\sfG(\rmF)$ by $\sfG(\rmF)^\circ$ the subgroup of $\sfG(\rmF)$ generated by all compact open subgroups.

\bl
The map \eqref{pg} descends to a bilinear map 
\[
 \la\,,\,\ra_\sfG:  \Lambda_{\sfG(\rmF)}\times \Psi_\sfG\rightarrow \Z.
\]
\el
\bpf
It suffices to prove that, for any algebraic character $\nu\in \Psi_{\sfG}$ and for any $g\in \sfG(\rmF)^\circ$ we always have $\val(\nu(g))=0$.

The map  $\val\circ \nu:  \sfG(\rmF)\to  \bbZ$ is continous, where we take discrete topology on $\bbZ$.  For any open compact subgroup $K$ of $\sfG(\rmF)$,  the image is a compact subgroup of $\bbZ$, which is forced to be trivial.  Hence the lemma follows.  
\epf

The paring $\la\,,\,\ra_\sfG$ is compatible with  homomorphisms of algebraic groups in the sense of the following obvious lemma:
\bl\label{paring}
Let $\phi: \sfG\rightarrow \sfH$ be a homomorphism of connected linear algebraic groups defined over $\rmF$. Then
\[
  \la g, \nu\circ \phi\ra_\sfG=\la \Lambda_\phi(g), \nu\ra_\sfH
\]
for all $g\in \Lambda_{\sfG(\rmF)}$ and $\nu\in \Psi_\sfH$. Here $\Lambda_\phi:\Lambda_{\sfG(\rmF)}\rightarrow \Lambda_{\sfH(\rmF)}$ denotes the homomorphism induced by $\phi: \sfG\rightarrow \sfH$.
\el

Let $\sfL$ be a connected reductive group defined over $\rmF$. 
We write
\[
  \sfL=\sfS \sfA [\sfL, \sfL],
\]
where $\sfS$ denotes the maximal split central torus in $\sfL$, $\sfA$ denotes the maximal anisotropic central torus in $\sfL$, and $[\sfL, \sfL]$ denotes the derived subgroup of $\sfL$. Write
\[
  \sfL^\dagger:=\sfA [\sfL, \sfL].
\]

\bl\label{lcirc}
We have $\sfL^\dagger(\rmF)^\circ=\sfL^\dagger(\rmF)$.
\el
\begin{proof}
Since $[\sfL, \sfL]$ is a semisimple connected  linear algebraic group over $\rmF$, every open normal subgroup of $[\sfL, \sfL](\rmF)$ has finite index in it (cf \cite[Proposition 3.18]{PR}). Since $[\sfL,\sfL](\rmF)^\circ$ is open and normal in $[\sfL,\sfL](\rmF)$, it implies that 
\[
   ([\sfL, \sfL](\rmF))^\circ=[\sfL, \sfL](\rmF).
\]
Since $\sfA$ is an anisotropic torus,  $\sfA(\rmF)$ is compact. Therefore $(\sfA(\rmF))^\circ=\sfA(\rmF)$. Now we have that
\[
  (\sfL^\dagger(\rmF))^\circ\supset ([\sfL, \sfL](\rmF))^\circ (\sfA(\rmF))^\circ=([\sfL, \sfL](\rmF))(\sfA(\rmF)).
\]
Note that $([\sfL, \sfL](\rmF))(\sfA(\rmF))$ has finite index in $\sfL^\dagger(\rmF)$ (cf. \cite[Corollary 2 of Theorem 6.16]{PR}). It follows that $\sfL^\dagger(\rmF)^\circ=\sfL^\dagger(\rmF)$. 
\end{proof}

The following lemma was stated in \cite[Chapter II, Prop.22]{Be}.
\bl\label{freeab2}
The group $\Lambda_{\sfL(\rmF)}$ is a lattice of rank $\mathrm e_\sfL$.
\el
\begin{proof}
Note that $\sfL(\rmF)/\sfL^\dagger(\rmF)$ is topologically isomorphic to a finite index subgroup of $(\sfL/\sfL^\dagger)(\rmF)$  (cf \cite[Corollary 2 of Theorem 6.16]{PR}).   It implies that $\Lambda_{\sfL(\rmF)/\sfL^\dagger(\rmF)}$ is a sublattice of $\Lambda_{(\sfL/\sfL^\dagger)(\rmF)}$ with torsion quotient. Hence the rank of $\Lambda_{\sfL(\rmF)/\sfL^\dagger(\rmF)}$ is  $e_{\sfL/\sfL^\dagger}=e_\sfL$.

Now Lemma \ref{lcirc} implies that
 $ \Lambda_{\sfL(\rmF)}=\Lambda_{\sfL(\rmF)/\sfL^\dagger(\rmF)}.$
This proves the lemma.

\end{proof}

The following lemma is obvious.
\bl\label{injtori}
\ben
\item  Let $\sfT$ is a split torus. Then $\la\,,\,\ra_\sfT$ is a perfect paring, namely it induces an isomorphism from $\Lambda_{\sfT(\rmF)}$ to $\Hom(\Psi_\sfT, \Z)$.

\item Let $\phi: \sfT_1\rightarrow \sfT_2$ be a surjective homomorphism of split tori over $\rmF$ with finite kernel. Then its induced homomorphism $\Lambda_{\sfT_1(\rmF)}\rightarrow \Lambda_{\sfT_2(\rmF)}$ is injective.
\een
\el

We are now ready to prove the following proposition.   
\bp\label{non_deg}
The paring $\la\,,\,\ra_\sfG: \Lambda_{\sfG(\rmF)}\times \Psi_{\sfG}\to \bbZ$ is non-degenerate. 
\ep

\bpf

Let $\sfL$ be a Levi component of $\sfG$, namely it is an algebraic subgroup of $\sfG$ such that $\sfG=\sfL\ltimes \sfU_\sfG$, where $\sfU_\sfG$ denotes the unipotent radical of $\sfG$. Then the projection homomorphism $\sfG\rightarrow \sfL$ induces an identification
\[
  \Psi_\sfG=\Psi_\sfL.
\]
Recall that $\sfU_\sfG(\rmF)$ is the union of all its compact subgroups. Therefore the projection homomorphism $\sfG\rightarrow \sfL$ also induces an identification
\[
  \Lambda_{\sfG(\rmF)}=\Lambda_{\sfL(\rmF)}.
\]
In view of Lemma \ref{paring}, it suffices to prove the proposition for the connected reductive  group $\sfL$.   

As above we write
\[
  \sfL=\sfS \sfA [\sfL, \sfL],
\]
where $\sfS$ denotes the maximal split central torus in $\sfL$, $\sfA$ denotes the maximal anisotropic central torus in $\sfL$, and $[\sfL, \sfL]$ denotes the derived subgroup of $\sfL$. Write
\[
  \sfL^\dagger:=\sfA [\sfL, \sfL].
\]

The natural algebraic group homomorphisms
\be \label{sgg}
\sfS\rightarrow \sfL\rightarrow \sfL/\sfL^\dagger
\ee
induce group homomorphisms
\be \label{sgg2}
  \Lambda_{\sfS(\rmF)}\rightarrow \Lambda_{\sfL(\rmF)}\rightarrow \Lambda_{(\sfL/\sfL^\dagger)(\rmF)}.
\ee
The homomorphisms \eqref{sgg} also induce group homomorphisms
\be \label{sgg3}
\Psi_\sfS\leftarrow \Psi_\sfL\leftarrow \Psi_{\sfL/\sfL^\dagger}.
\ee
It is clear that the two homomorphisms in \eqref{sgg3} are injective, and three lattices have the same rank.

Part (2) of Lemma \ref{injtori} implies that $\Lambda_{\sfS(\sfF)}\to  \Lambda_{(\sfL/\sfL^\dagger)(\rmF)}$ is injective.  Hence the map $\Lambda_{\sfS(\sfF)}\to  \Lambda_{\sfL(\rmF)}$ is also injective.  In view of Lemma \ref{freeab2} and part (1) of Lemma \ref{injtori}, the lattice  $ \Lambda_{\sfL(\rmF)}$ has the same rank as $\Lambda_{\sfS(\sfF)}$.  It implies that $ \Lambda_{\sfL(\rmF)}\rightarrow \Lambda_{(\sfL/\sfL^\dagger)(\rmF)}$ is also injective.

In view of Lemma \ref{paring}, the non-degeneracy of $\la\,,\,\ra_\sfS$ and $\la\,,\,\ra_{\sfL/\sfL^\dagger}$ imply the non-degeneracy of the  paring $\la\,,\,\ra_\sfL$. Consequently, the  paring $\la\,,\,\ra_\sfG$ is also non-degenerate.

\epf

 \subsection{Invariance of Igusa zeta integral}
\label{inv-section}
In this subsection we keep the same setup as in Theorem \ref{Main}.  Recall that $f$ is $\nu$-invariant, i.e.  $f(g\cdot x)=\nu(g)f(x)$,  and $\mu$ is $\chi$-invariant. The zeta integral $Z_{f,\mu}$ satisfies the following invariance:
\be
\label{inv}
 Z_{f,\mu} \cdot g= |\nu(g)|^s \chi(g)  Z_{f,\mu} .  
 \ee

For any $\phi\in \CS(\sfX(\rmF))$,  $Z_{f,\mu}(\phi)$ is a meromorphic function in $s$.   
Consider the Laurent expansion of $Z_{f,\mu}$ 
\[Z_{f,\mu}=\sum   Z_{f,\mu,i}s^i ,  \]
where $Z_{f,\mu,i}\in  D(\sfX(\rmF))^{\chi,\infty}$ for each $i\in \bbZ$.    Let $i_0$ be the largest integer such that $Z_{f,\mu,-i_0}\neq 0$ and $Z_{f,\mu,-i_0-1}=0$.

\bl
\label{igu_inv}
\ben
\item  For any $i$, we have the following relation  \[ Z_{f,\mu, i}\cdot (g-\chi(g))=\chi(g)\sum_{k=1 }^\infty  \frac{ (\log|\nu(g)|)^k}{k!}   Z_{f,\mu, i-k} ,  \]    
\item The coefficients $Z_{f,\mu, -i_0}, Z_{f,\mu, -i_0+1},\cdots,  Z_{f,\mu,0},\cdots$ are linearly independent in $D(\sfX(\rmF))^{\chi,
\infty}$.
\een
\el
\bpf
The Taylor expansion of $|\nu(g)|^s$ at $s=0$ is 
\[ |\nu(g)|^s =\sum_{j=0}^\infty   \frac{(\log|\nu(g)|)^j}{j!} s^j ,\]
By comparing  the Laurent expansion of  the both sides of (\ref{inv}), we have 
\be
\label{recursive}
 Z_{f,\mu,i}\cdot g=  \chi(g)(Z_{f,\mu,i}  + (\log|\nu(g)| )Z_{f,\mu,i-1}+  \frac{(\log|\nu(g)|)^2 }{2}Z_{f,\mu, i-2} +\cdots   ). 
 \ee
It proves the part (1) of the lemma. 

Note that $Z_{f,\mu, -i_0}\neq 0$ is $\chi$-invariant.  Use the formula (\ref{recursive}) and by induction we can easily see that 
for any $i\geq -i_0$, $Z_{f,\mu, i}\neq 0$ and $Z_{f,\mu, i}\in D(\sfX(\rmF))^{\chi, i+i_0}$.

Part (2) of the lemma also easily follows from  the formula (\ref{recursive}). 
\epf

\bp
\label{residue}
If the zeta integral  $Z_{f,\mu}$ has pole at $s=0$, then there is at least one $\chi$-admissible orbit in $V(f)$. 
\ep

\bpf
If there is no $\chi$-invariant distributions on every $G$-orbit in $V(f)$,  by Theorem \ref{Localization} there is no $\chi$-invariant distribution on $V(f)$.  

If $Z_{f,\mu}$ has pole at $s=0$ of order $r$, in view of Lemma \ref{igu_inv}  $Z_{f,\mu,-r}$ is a $\chi$-invariant distribution and supported in $V(f)$.  It is a contradiction.  Hence there exists at least one $\chi$-admissible orbit in $V(f)$.
\epf
\br  This proposition is a generalization of a result of Igusa,  Gyoja in the case of group action on vector spaces (cf. \cite[Theorem 8.5.1]{Ig} and the remark therein). 
\er

It implies that if there is no $\chi$-admissible orbit in $V(f)$,  then $Z_{f,\mu}$ is analytic at $s=0$, hence $\mu$ can be extended to a $\chi$-invariant distribution on $\sfX(\rmF)$. Comparing with \cite[Theorem 1.4]{HS}, in special cases we get stronger result, i.e. we don't need to assume non-weakly admissibility. 
\subsection{Final proof}
\label{final}

Before we  conclude Theorem \ref{Main}, we need to make more preparations. 
\bl
\label{incl_lem}
Let $X$ be an $\ell$-space with a constructible action of an $\ell$-group $G$. Let $Y$ be a $G$-stable locally closed subset of $X$. let $\chi$ be a character of $G$. If all $\chi$-admissible orbits in $X$ are contained in $Y$, then we have  a natural embedding of $G$-modules
\[ D(X)^{\chi,\infty}\incl  D(Y)^{\chi,\infty} . \]
\el

\bpf
Let $\overline{Y}$ be closure of $Y$ in $X$.  Note that we have the following exact sequence of $G$-modules
\[  0\rightarrow  D(\overline{Y})^{\chi,\infty}\rightarrow D(X)^{\chi,\infty}\rightarrow D(X\backslash \overline{Y})^{\chi,\infty} .  \]

Since there is no $\chi$-admissible orbit in $X\backslash \overline{Y}$, by Theorem \ref{Localization} and part 2) of Remark \ref{Constructibility}, $D(X\backslash \overline{Y})^{\chi}=0$. It follows that 
$D(X\backslash \overline{Y})^{\chi,\infty}=0$.  Hence we have the isomorphism
\[    D(\overline{Y})^{\chi,\infty}\simeq D(X)^{\chi,\infty} . \]

Since $Y$ is open in $\overline{Y}$, we have the following exact sequence 
\[  0\rightarrow  D(\overline{Y}\backslash Y )^{\chi,\infty}\rightarrow D(\overline{Y})^{\chi,\infty}\rightarrow D(Y)^{\chi,\infty} .  \]

Using Theorem \ref{Localization} again, $D(\overline{Y}\backslash Y)^{\chi,\infty}=0$. Hence we get the embedding 
\[  D(X)^{\chi,\infty}\simeq   D(\overline{Y})^{\chi,\infty} \incl D(Y)^{\chi,\infty} . \]

\epf

Recall that $\Psi_i$ is the subgroup of $\Psi_{\sfG}$ spanned by  $\Psi_{\rmO}$ for all $\chi$-admissible orbits in $V_i\backslash V_{i-1}$.

   \bl
   \label{elem_lem}
   For any algebraic character $\nu\in \Psi_\sfG$, if $\nu\not\in \Psi_i\otimes_\bbZ \bbQ $, then there exits $g\in \sfG(\rmF)$ such that $|\nu(g)|\not= 1$ and for any $\nu'\in \Psi_i$, $|\nu'(g)|=1$.
   \el
   \bpf
      Let $\tilde{\Psi}_i$ be the lattice $\Psi_i +\bbZ \nu\subset \Psi_\sfG$. The  assumption $\nu\not\in \Psi_i\otimes_\bbZ \bbQ$ implies that
    \[\rank(\tilde{\Psi}_i)= \rank(\Psi_i)+1 . \]
    Let $\Psi_i^\perp$ be the following sublattice of $\Lambda_{\sfG(\rmF)}$ 
\[
     \Psi_i^\perp:=\{ \bar{g}\in \Lambda_{\sfG(\rmF)}  \li   \la \bar{g}, \Psi_i\ra_\sfG =0    \}  
     \]
    and similarly let $\tilde{\Psi}_i^\perp$ be the following lattice
        \[ \tilde{\Psi}_i^\perp:=\{ \bar{g}\in \Lambda_{\sfG(\rmF)}  \li   \la \bar{g}, \tilde{\Psi}_i\ra_\sfG =0    \}.  \]
 By the non-degeneracy of the pairing $\la ,\ra_\sfG: \Lambda_{\sfG(F)}\times  \Psi_{\sfG}\to \bbZ$ (Proposition \ref{non_deg}),  we have
 \[ \rank (\Psi_i^\perp)=\rank(\tilde{\Psi}_i^\perp)+1.  \]
    In particular there exists $g\in \sfG(\rmF)$ such that $\la \bar{g}, \nu \ra_\sfG\neq 0$ and $\la \bar{g}, \Psi_i \ra_\sfG= 0$.
    
       \epf
    By Lemma \ref{elem_lem},  for each $i$ there exists an element $g_{i} \in \sfG(\rmF)$ such that $|\nu(g_{i}  )|\neq 1$ and for any 
 $\nu' \in \Psi_{i} $, $\abs{\nu'(g_{i})}=1$.   In view of  Proposition \ref{key} and Corollary \ref{hom_action},  $g_i$ acts on $D(\rmO)^{\chi,\infty}$ by $\chi(g_i)$ for any orbit $\rmO$ in $V_i\backslash V_{i-1}$.
       \bl
       \label{strata_action}
     With the choice of $g_i$ as above, the operator  $g_i$ acts on $D(V_i\backslash V_{i-1})^{\chi,\infty}$ by the scalar $\chi(g_i)$.
       \el
       \bpf
       By assumption of Theorem \ref{Main}, there exists a fiberizable $\sfG(\rmF)$-stable locally closed subset $Y_i$ of $V_i\backslash V_{i-1}$ such that all $\chi$-admissible orbits in $V_i\backslash V_{i-1}$ are contained in $Y_i$.  By Proposition \ref{act_fiber}, $g_i$ acts on $D(Y_i)^{\chi,
       \infty}$ by $\chi(g_i)$. In view of Remark \ref{Constructibility}, the action of $\sfG(\rmF)$ on $V_i\backslash V_{i-1}$ is constructible. By Lemma \ref{incl_lem}, we have the embedding of $G$-modules $D(V_i\backslash V_{i-1})^{\chi,\infty}\subset D(Y_i)^{\chi,\infty}$. Hence $g_i$ acts on $D(V_i\backslash V_{i-1})^{\chi,\infty}$  by $\chi(g_i)$.
       \epf
       
Let $i_1<i_2<\cdots<  i_{\ell_\chi}$ be all integers such that  $V_{i_t}\backslash V_{i_t-1}$ contain at least one  $\chi$-admissible orbit. 
Let $g_{i_t} \in \sfG(\rmF)$ be the element such that $|\nu(g_{i_t}  )|\neq 1$ and for any 
 $\nu' \in \Psi_{i_t} $, $|\nu'(g_{i_t})|=1$.   
 
\bl
\label{residue_action}With the choice of elements $g_{i_1},g_{i_2},\cdots, g_{i_{\ell_\chi}}$ as above, 
the operator $(g_{i_1}-\chi(g_{i_1}))(g_{i_2}-\chi(g_{i_2}))\cdots  (g_{i_{\ell_\chi}}-\chi(g_{i_{\ell_\chi}}))$ acts on $D(V(f))^{\chi,\infty}$ by zero. 
\el

\bpf
First of all we look at the following exact sequence of $\sfG(\rmF)$-modules
 \[  0\to  D(V_{i_{\ell_\chi}})^{\chi,\infty} \to  D(V(f))^{\chi,\infty}\to  D(V(f) \backslash V_{i_{\ell_\chi}}  )^{\chi,\infty} . \]
 Since there is no $\chi$-admissible orbit in $V(f) \backslash V_{i_{\ell_\chi}} $, by Theorem \ref{Localization} $D(V(f) \backslash V_{i_{\ell_\chi}} )^\chi=0$, and hence $D(V(f) \backslash V_{i_{\ell_\chi}}  )^{\chi,\infty}=0$.
  It follows that  
  \[ D(V(f))^{\chi,\infty}\simeq D(V_{i_{\ell_\chi}})^{\chi,\infty}.\]

We use induction to show  that for any $t$,
$ (g_{i_1}-\chi(g_{i_1})) (g_{i_2}-\chi(g_{i_2}))\cdots  (g_{i_t}-\chi(g_{i_t}))$ acts on $D(V_{i_t})^{\chi,\infty}$ by zero.  When $t=1$, by  Lemma \ref{strata_action}  $g_{i_1}-\chi(g_{i_1})$ acts on $V_{i_1}$ by zero. 
Look at the following exact sequence 
\[  0\to  D(V_{i_{t-1}})^{\chi,\infty} \to  D(V_{i_{t}})^{\chi,\infty}\to  D(V_{i_t} \backslash V_{i_{t-1}})^{\chi,\infty} . \]
It suffices to show that $ (g_{i_1}-\chi(g_{i_1})) (g_{i_2}-\chi(g_{i_2}))\cdots  (g_{i_t}-\chi(g_{i_t}))$ acts on $D(V_{i_{t-1}})^{\chi,\infty} $ and $D(V_{i_t} \backslash V_{i_{t-1}})^{\chi,\infty}$ by zero simultaneously. 

By induction $  (g_{i_1}-\chi(g_{i_1})) (g_{i_2}-\chi(g_{i_2}) )\cdots  (g_{i_{t-1}}-\chi(g_{i_{t-1}})) $ acts on $D(V_{i_{t-1} })^{\chi,\infty}$ by zero, and by Lemma \ref{strata_action},  $g_{i_t}-\chi(g_{i_t})$ acts on $D(V_{i_t} \backslash V_{i_t-1} )^{\chi,\infty}$ by zero.  It follows  that $ (g_{i_1}-\chi(g_{i_1}))(g_{i_2}-\chi(g_{i_2}))\cdots  (g_{i_t}-\chi(g_{i_t}))$ acts on $D(V_{i_t})^{\chi,\infty}$ by zero.   This finishes the proof of the lemma. 
  \epf

Finally we come back to the proof of Theorem \ref{Main}. If  there is no $\chi$-admissible orbits in $V(f)$, in view of Proposition \ref{residue},  the theorem holds.  

From now on we assume that there is at least one $\chi$-admissible orbit in $V(f)$.  In this case $\ell_\chi\geq 1$.
 We first show that the pole of $Z_{f,\mu}$ at $s=0$ is bounded by $\ell_\chi$.  If $Z_{f,\mu}$ is analytic at $s=0$, then it is clearly true.  If $Z_{f,\mu}$ has pole at $s=0$, then $Z_{f,\mu,-1}\neq 0$ and it is supported on $V(f)$.   In view of Lemma \ref{igu_inv} and Lemma \ref{residue_action}, we have 
\begin{align}
& 0=Z_{f,\mu,-1}\cdot (g_{i_1}-\chi(g_{i_1}))(g_{i_2}-\chi(g_{i_2}))\cdots  (g_{i_{\ell_\chi}}-\chi(g_{i_{\ell_\chi}}))\\
 &=  \chi(g_{i_1} g_{i_2}\cdots g_{i_{\ell_\chi}})  \log\abs{\nu(g_{i_1})}\log\abs{\nu(g_{i_2})}\cdots \log\abs{\nu(g_{i_{\ell_\chi}} )  } \cdot Z_{f,\mu,-\ell_\chi-1}+\cdots,
\end{align}
where $\cdots$ denotes more other terms consisting of $Z_{f,\mu,i}$ which are nonzero when $i<-\ell_\chi-1$.  For each $t$, $ |\nu(g_{i_t})|\neq 1 \iff  \log(\abs{\nu(g_{i_t})})\neq 0$.  By the linear independence of $Z_{f,\mu,i}$ (Lemma \ref{igu_inv}),  $Z_{f,\mu,-\ell_\chi-1}=0$.   In particular it follows that the order of the pole of $Z_{f,\mu}$ at $s=0$ is bounded by $\ell_\chi$.\\

We now prove the second part of the Theorem \ref{Main}.  By assumption $Z_{f,\mu}$  has pole  of order $\ell_\chi$ at $s=0$ where $\ell_\chi\geq 1$.  
   Assume $\mu$ can be extended to a $\chi$-invariant distribution $\tilde{\mu}$.  Then $Z_{f,\mu,0}-\tilde{\mu}$ is supported in $V(f)$.  By Lemma \ref{residue_action},  
\be
\label{z_zero}
(Z_{f,\mu,0}-\tilde{\mu})\cdot (g_{i_1}-\chi(g_{i_1}))(g_{i_2}-\chi(g_{i_2}))\cdots  (g_{i_{\ell_\chi}}-\chi(g_{i_{\ell_\chi}})) =0.   
\ee

     On the other hand, in view of Lemma \ref{igu_inv}  we have 
     \begin{align}
 & (Z_{f,\mu,0}-\tilde{\mu})\cdot (g_{i_1}-\chi(g_{i_1}))(g_{i_2}-\chi(g_{i_2}))\cdots  (g_{i_{\ell_\chi}}-\chi(g_{i_{\ell_\chi}})) \\
&= Z_{f,\mu,0}\cdot (g_{i_1}-\chi(g_{i_1}))(g_{i_2}-\chi(g_{i_2}))\cdots  (g_{i_{\ell_\chi}}-\chi(g_{i_{\ell_\chi}}))   \\
  &=   \chi(g_{i_1} g_{i_2}\cdots g_{i_{\ell_\chi}})  \log\abs{\nu(g_{i_1})}\log\abs{\nu(g_{i_2})}\cdots \log\abs{\nu(g_{i_{\ell_\chi}})}  \cdot Z_{f,\mu,-\ell_\chi}
\end{align}
is not zero, since for each $t$, $\log(\abs{\nu(g_{i_t})})\neq 0$.
It  contradicts with (\ref{z_zero}).   Therefore $\mu$ can not be extended to a $\chi$-invariant distribution on $\sfX(\rmF)$.
This finishes the proof of Theorem \ref{Main}.

\section {Examples}

\label{ex_sect}

We consider the  action of $ \rmG= \rmF^\times \times \GL_n(\rmF)\times \rmF^\times$ on ${\rm V}:=\rmF^n\times \rmF^n$  given by  $$(a,g,b)\cdot (x,y)=(axg^{-1}, b^{-1}yg^t),$$
where $x=(x_1,x_2,\cdots,x_n), y=(y_1,y_2,\cdots,y_n)\in \rmF^n$, $a,b\in\rmF^\times $, $g\in \GL_n(\rmF)$ and $g^t$ is the transpose of $g$. 

For convenience we denote by $\und{\chi}=(\chi_1,\chi_2,\chi_3)$ the following character of $\rmG$,
\[(a,g,b)\mapsto  \chi_1(a)\chi_2(\det g) \chi_3(b). \]
Any character  of $\rmG$ is given by $(\chi_1,\chi_2,\chi_3)$ for some characters $\chi_1,\chi_2,\chi_3$ of $\rmF^\times$.

  In this section, we  prove the following theorem.    
\bt
\label{multiplicity_one}
For any character $\und{\chi}$ of $\rmG$, $ \dim D(\rmV)^{\und{\chi}}\leq  1$ .
\et
When $n=1$, $n=2$ and $n\geq3$, the arguments will be different. We will prove the more precise  statements in   Proposition \ref{case1}, Proposition \ref{case_2} and Proposition \ref{case_3} for each case.

The $\rmG$-space $\rmV$ is a  prehomogeneous space with the semi-invariant polynomial
\[ f(x,y)=x\cdot y^t=\sum_{i=1}^{n} x_iy_i.\]
   The polynomial $f$ is $\nu$-invariant, where $\nu$ is an algebraic character of $\rmG$ given by  $\nu(a,g,b)=ab^{-1}$, for any $(a,g,b)\in \rmG$.

Let $ \rmV_f$ be the subset of $\rmV$ such that $f\not=0$.  Let ${\rm Q}$ be the zero set of $f$.   We denote by ${\rm Q}^\times$ the  set
$   \{ (x,y)\in {\rm Q}|   x\not=0,y\not=0    \}  $.   Denote by ${\rm O}_{1,0}$ the subset $(\rmF^n\backslash\{0\})\times \{0\}\subset \rmV$ and $\rm O_{0,1}$ the subset $\{0\}\times( \rmF^n\backslash \{0\})\subset  \rmV$.
When $n=1$,  ${\rm Q}^\times $ is empty.
The following lemma is clear.
\begin{lemma}
\begin{enumerate}
\item If $n=1$,  the action of $\rmG$ on $\rmV$ consists of orbits $ \rmV_f,  {\rm O}_{1,0}, {\rm O}_{0,1},  \{0\}$.
\item  If $n\geq 2$,  the action of $\rmG$ on $\rmV$ consists of orbits $\rmV_f,  {\rm Q}^\times,  {\rm O}_{1,0}, {\rm O}_{0,1}, \{0\}$.
\end{enumerate}
\end{lemma}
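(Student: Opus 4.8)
The plan is to translate the given $\rmG$-action into the standard action of $\GL_n$ on $V^{*}\oplus V$, with $V=\rmF^{n}$, twisted by the two copies of $\rmF^{\times}$, and then to read off the orbits from transitivity of $\GL_n(\rmF)$ on nonzero vectors together with one stabilizer computation. First I would record the decomposition $\rmV=\rmV_f\sqcup\rmQ^{\times}\sqcup\rmO_{1,0}\sqcup\rmO_{0,1}\sqcup\{0\}$ into $\rmG$-stable subsets (with $\rmQ^{\times}=\varnothing$ when $n=1$): the locus $f\neq 0$ is $\rmG$-stable because $f$ is $\nu$-invariant, while the conditions $x=0$ and $y=0$ are each preserved because the first $\rmF^{\times}$ together with the $\GL_n$-factor acts only on the $x$-coordinate and the second $\rmF^{\times}$ together with the $\GL_n$-factor acts only on the $y$-coordinate. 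So it suffices to show that each of these five sets is a single $\rmG$-orbit (each is visibly nonempty under the stated hypotheses).

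Next I would set up the dictionary: send $(x,y)\in\rmV$ to the pair $(\xi,v)$, where $\xi\in V^{*}$ is the functional $w\mapsto x\cdot w^{t}$ and $v=y^{t}\in V$. A direct check shows that $x\mapsto xg^{-1}$ becomes the contragredient action of $g$ on $\xi$, that $y\mapsto yg^{t}$ becomes the standard action $v\mapsto gv$, that $a\in\rmF^{\times}$ scales $\xi$, that $b\in\rmF^{\times}$ scales $v$ by $b^{-1}$, and that $f(x,y)=\langle\xi,v\rangle$. Getting this dictionary exactly right — keeping track of row versus column vectors and of $g^{-1}$ versus $g^{t}$ — is the one place where care is needed, and I would regard it as the main (though entirely elementary) point.

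With the dictionary in place the orbit computations are routine. For $\{0\}$ there is nothing to check. For $\rmO_{1,0}$ and $\rmO_{0,1}$, $\GL_n(\rmF)$ acts transitively on $V\setminus\{0\}$, hence also on $V^{*}\setminus\{0\}$, so each of these loci is already a single orbit under the $\GL_n$-factor. For $\rmV_f$: given $(\xi,v)$ with $\langle\xi,v\rangle\neq0$, use $\GL_n$ to move $v$ to $e_1$; then $\xi(e_1)=c$ for some $c\in\rmF^{\times}$, and applying the unipotent radical of the stabilizer of $e_1$ in $\GL_n$ clears the remaining components of $\xi$, bringing the pair to $(c\,e_1^{*},e_1)$; finally $a\in\rmF^{\times}$ rescales $c$ to $1$. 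For $\rmQ^{\times}$ with $n\geq2$: move $v$ to $e_1$ as before; now $\xi(e_1)=0$ while $\xi\neq0$, so $\xi$ restricts to a nonzero functional on $W=\operatorname{span}(e_2,\dots,e_n)$, and the Levi factor $\GL(W)\cong\GL_{n-1}$ of the stabilizer of $e_1$ carries it to $e_2^{*}$, bringing the pair to $(e_2^{*},e_1)$; thus $\rmQ^{\times}$ is a single orbit. When $n=1$ the pairing $\langle\xi,v\rangle$ is just multiplication in $\rmF$, so $\xi\neq0$ and $v\neq0$ force $\langle\xi,v\rangle\neq0$, i.e. $\rmQ^{\times}=\varnothing$; this accounts for the difference between the two cases of the lemma. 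Collecting these facts yields the claimed orbit decomposition.
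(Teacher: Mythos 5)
Your proof is correct. The paper offers no argument here (it simply states ``The following lemma is clear''), and your direct verification --- checking that the five pieces are $\rmG$-stable and then using transitivity of $\GL_n(\rmF)$ on nonzero vectors plus the stabilizer-of-$e_1$ computation for $\rmV_f$ and $\rmQ^\times$ --- is exactly the routine orbit computation the authors are taking for granted.
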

The closure relation  of these orbits is  clear. 

The following lemma follows from Lemma \ref{adm_lem} by analysis on each orbit. 
\begin{lemma}
\label{basic_lemma_ex2}
Let $\und{\chi}=(\chi_1,\chi_2,\chi_3)$ be a character of $\rmG$.  Then
\begin{enumerate}
\item   $\und{\chi}$ is admissible on $\rmV_f$ if and only if
$$\begin{cases}
\chi_1\chi_2\chi_3=1   \qquad   \text{ if }  n=1\\
\chi_1\chi_3=1, \chi_2=1  \qquad \text{ if } n\geq 2.
\end{cases} $$
\item    $\und{\chi}$ is admissible on $\rmQ^\times$ if and only if
$$\begin{cases}
\chi_1\chi_2=\abs{\cdot},  \chi_2\chi_3=\abs{\cdot}^{-1}   \qquad   \text{ if }  n=2\\
\chi_1=\abs{\cdot}^{n-1},  \chi_2=1, \chi_3=\abs{\cdot}^{-n+1}  \qquad \text{ if } n\geq 3.
\end{cases} $$
\item  $\und{\chi}$ is admissible on ${\rm O}_{1,0}$ if and only if
$$\begin{cases}
\chi_1\chi_2=1,\chi_3=1   \qquad   \text{ if } \quad  n=1\\
\chi_1=\abs{\cdot}^{n}, \chi_2=\abs{\cdot }^{-1},  \chi_3=1   \qquad \text{ if } n\geq 2.
\end{cases} $$
\item  $\und{\chi}$ is admissible on ${\rm O}_{0,1}$ if and only if
$$\begin{cases}
\chi_1=1,  \chi_2\chi_3=1   \qquad   \text{ if }  n=1\\
\chi_1=1, \chi_2=\abs{\cdot},  \chi_3=\abs{\cdot}^{-n}   \qquad \text{ if } n\geq 2.
\end{cases} $$
\item  $\und{\chi}$ is admissible on $\{0\}$ if and only if $\chi_1=\chi_2=\chi_3=1$.
\end{enumerate}
\end{lemma}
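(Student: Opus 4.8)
The plan is to apply Lemma~\ref{adm_lem} to each of the finitely many $\rmG$-orbits listed above. Since $\rmG=\rmF^\times\times\GL_n(\rmF)\times\rmF^\times$ is the $\rmF$-points of $\sfG=\bbG_m\times\GL_n\times\bbG_m$, a direct product in which each factor acts only on its own summand of $\mathrm{Lie}(\sfG)$ and in which $\det\Ad_{\GL_n}=1$, the algebraic modular character $\Delta_\sfG$ is trivial. Hence, writing a given orbit as $\rmO=\sfG(\rmF)/\sfH(\rmF)$ with $\sfH=\mathrm{Stab}_\sfG(p)$ for a chosen base point $p$, Lemma~\ref{adm_lem} says that $\und{\chi}$ is admissible on $\rmO$ if and only if $\und{\chi}|_{\sfH(\rmF)}=|\Delta_\sfH|^{-1}$. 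So for each orbit I will (i) pick a convenient $p$, (ii) write $\sfH$ down explicitly as a subgroup of matrices, (iii) compute $\Delta_\sfH$ --- equivalently, exhibit a nonzero $\sfG$-semi-invariant measure on $\rmO$ and read off its character, (iv) unwind the identity $\und{\chi}|_{\sfH(\rmF)}=|\Delta_\sfH|^{-1}$ into conditions on $\chi_1,\chi_2,\chi_3$, keeping in mind that the admissible characters of $\sfG$ on $\rmO$ form a coset of the group of characters of $\sfG$ trivial on $\sfH(\rmF)$.

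For the low-dimensional orbits this is routine. At $p=0$ one has $\sfH=\sfG$, so $\und{\chi}=1$, giving (5). At $p=(e_1,0)$ the stabilizer is $\{(a,g,b):\text{first row of }g=(a,0,\dots,0)\}$ with $b$ unconstrained, and Lebesgue measure $dx$ on $\rmF^n$ restricted to $\rmO_{1,0}$ is semi-invariant, transforming by $|a|^n|\det g|^{-1}$; since $\sfH$ contains $\GL_{n-1}$ (the lower-right block) for $n\ge2$ this is the unique admissible character, yielding (3), while for $n=1$ the stabilizer is the torus $\{(a,a,b)\}$ and one only gets $\chi_1\chi_2=1,\chi_3=1$. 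The orbit $\rmO_{0,1}$ is handled symmetrically with the measure $dy$, giving (4). At $p=(e_1,e_1)\in\rmV_f$ (where $f=1$) the stabilizer is $\{(a,\mathrm{diag}(a,g'),a):a\in\bbG_m,\ g'\in\GL_{n-1}\}\cong\bbG_m\times\GL_{n-1}$, which is reductive so $\Delta_\sfH=1$; the restriction $\und{\chi}|_{\sfH}$ factors through $(\chi_1\chi_2\chi_3,\chi_2)$, and since $\det$ is onto $\GL_{n-1}(\rmF)$ this gives $\chi_2=1,\chi_1\chi_3=1$ for $n\ge2$ (and just $\chi_1\chi_2\chi_3=1$ for $n=1$, as $\GL_0$ is trivial), i.e. (1).

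The substantive case is $\rmQ^\times$, empty for $n=1$ and a single orbit for $n\ge2$. Taking $p=(e_1,e_2)$ (so $f=0$ with both coordinates nonzero), the stabilizer is $\{(a,g,b):\text{first row of }g=(a,0,\dots,0),\ \text{second column of }g=(0,b,0,\dots,0)^{t}\}$. For $n=2$ this group is solvable, $\cong\bbG_m^2\ltimes\bbG_a$, with $\Delta_\sfH$ coming from its one unipotent direction; crucially $\det g=ab$ on $\sfH$, so $\und{\chi}|_\sfH$ sees $\chi_1,\chi_2,\chi_3$ only through $\chi_1\chi_2$ and $\chi_2\chi_3$, producing the two-parameter family in (2) for $n=2$. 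For $n\ge3$ the stabilizer contains a copy of $\GL_{n-2}$ (the lower-right $(n-2)\times(n-2)$ block of $g$), which forces $\chi_2=1$ and then rigidifies the remaining equations into the unique character of (2) for $n\ge3$. A clean way to extract that character: the Gelfand--Leray residue $dx\,dy/df$ is a nonzero $\sfG$-semi-invariant measure on $\rmQ^\times$ --- indeed $df=\sum_i(y_i\,dx_i+x_i\,dy_i)$ is nonvanishing off $\{x=y=0\}$ --- and from $f\mapsto ab^{-1}f$, $dx\mapsto|a|^n|\det g|^{-1}dx$, $dy\mapsto|b|^{-n}|\det g|\,dy$ it transforms by $|a|^{n-1}|b|^{-(n-1)}$, i.e. by the character $(|\cdot|^{n-1},1,|\cdot|^{-n+1})$; modulo characters trivial on $\sfH(\rmF)$ this recovers both halves of (2).

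I expect the main obstacle to be exactly $\rmQ^\times$: one must get its stabilizer and its group structure right, and recognize that the dichotomy $n=2$ versus $n\ge3$ is precisely the passage from a solvable stabilizer --- too small to separate the three $\rmF^\times$-characters, hence a positive-dimensional family of admissible $\und{\chi}$ --- to a stabilizer containing $\GL_{n-2}$, which kills $\chi_2$ and makes the admissible character unique. Everything else reduces to bookkeeping with Jacobians and modular characters once the base points are fixed; one should also record at the outset (from the preceding orbit lemma) that each of $\rmV_f,\rmQ^\times,\rmO_{1,0},\rmO_{0,1},\{0\}$ is genuinely $\sfG$-homogeneous, so that Lemma~\ref{adm_lem} applies.
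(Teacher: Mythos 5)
Your proposal is correct and is exactly the computation the paper intends: it offers no details beyond "follows from Lemma \ref{adm_lem} by analysis on each orbit," and you carry out that analysis — base points, stabilizers, modular characters (or equivalently explicit semi-invariant measures such as $dx$, $dy$, $dx\,dy/|df|$), and the coset structure of admissible characters modulo those trivial on the stabilizer. The stabilizer computations, including the delicate $\rmQ^\times$ case distinguishing $n=2$ from $n\geq 3$ via the presence of a $\GL_{n-2}$ block, all check out.
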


We consider the following zeta integral
$$Z_{f,\chi(f)\mu}(\phi)=  \int_{\rmV}   \phi(v)  \abs{f(v)}^s  \chi(f(v))  d\mu, $$
where $\chi$ is a character of $\rmF^\times$,  $\mu$ is the Haar measure on $\rmV$ and $\phi\in \CS(\rmV)$. The zeta integral  $Z_{f,\chi(f)\mu}$ absolutely converges when $\Re(s)\gg 0$, moreover it admits a meromorphic continuation.
 
Note that $\mu$ is $(\abs{\cdot}^n, 1, \abs{\cdot}^{-n})$-invariant, and  $Z_{f,\chi(f)\mu}$ is $(\chi \abs{\cdot}^{s+n}, 1, \chi^{-1}\abs{\cdot}^{-s-n})$-invariant.

By the following computation (cf.\cite[Chapter 10.1]{Ig}),
  \be
  \label{Igusa_ex2}
  \int_{\rmR^{2n}} \abs{f(v)}^sd\mu=\frac{(1-q^{-1})(1-q^{-n})}{(1-q^{-1-s})(1-q^{-n-s})} ,
  \ee
 $Z_{f,\abs{f}^{-1}\mu}$  and $Z_{f, \abs{f}^{-n} \mu }$ has pole at $s=0$.

\bp
\label{case1}
Assume that $n\geq 3$.   Then $\dim D(\rmV)^{\und{\chi}}=1$ if and only if  
\be
\label{3_cond1}
\chi_1\chi_3=1\, , \chi_2=1 \quad \text{ or } 
\ee
  \be
  \label{3_cond2}
  (\chi_1,\chi_2,\chi_3)=(\abs{\cdot}^{n},  \abs{\cdot}^{-1}, 1)\, \text{ or } \, (1, \abs{\cdot}, \abs{\cdot}^{-n}).\ee
\ep
\bpf
When the conditions (\ref{3_cond1}) and (\ref{3_cond2}) do not hold, from Lemma \ref{basic_lemma_ex2} we see that any orbit in $V$ is not $\und{\chi}$-admissible. By Theorem \ref{Localization}, we have $D(\rmV)^{\und{\chi}}=0$.

Any semi-invariant distribution on $\rmV_f$ is given by $\chi(f)\mu$, which is $(\chi \abs{\cdot}^n, 1, \chi^{-1} \abs{\cdot}^{-n})$-invariant.   When $\chi\not=\abs{\cdot}^{-1}, \abs{\cdot}^{-n}$, the character  $(\chi \abs{\cdot}^n, 1, \chi^{-1} \abs{\cdot}^{-n})$ is not admissible on any orbit in $\rmQ$. By Proposition \ref{residue},  $Z_{f,\chi(f)\mu}$ is analytic at $s=0$. Hence  $\chi(f)\mu$  can be extended to a $(\chi\abs{\cdot}^n, 1, \chi\abs{\cdot}^{-n})$-invariant distribution on $\rmV$.  The extension is unique since other orbits are not $(\chi\abs{\cdot}^n, 1, \chi\abs{\cdot}^{-n})$-admissible.    

When $\chi=\abs{\cdot}^{-1}$ or  $\chi=\abs{\cdot}^{-n}$, the only possible $(\chi \abs{\cdot}^n, 1, \chi^{-1} \abs{\cdot}^{-n})$-admissible orbits in $\rmQ$ are  $\rmQ^\times$ and $\{0\}$. The lattice $\Psi_\rmO$ is trivial on  $\rmQ^\times$ and $\{0\}$. 
In view of (\ref{Igusa_ex2}), $Z_{f,\chi(f)\mu}$ has pole at $s=0$.  By  Theorem \ref{Main} or Corollary \ref{cor_1},  $Z_{f,\chi(f)\mu}$ has simple pole at $s=0$
and  $\chi(f)\mu$ can not be extended to  a $(\chi \abs{\cdot}^{n}, 1,\chi\abs{\cdot}^{-n})$-invariant distribution on $\rmV$.   In this case,  
$(1,1,1)$-invariant distribution on $\rmV$ is only supported on $\{0\}$, and the $(\abs{\cdot}^{1-n}, 1,\abs{\cdot}^{n-1})$-invariant distribution is contributed by  the $(\abs{\cdot}^{1-n}, 1,\abs{\cdot}^{n-1})$-invariant distribution on the closure of $\rmQ^\times$.

When $\und{\chi}=(\abs{\cdot}^{n},  \abs{\cdot}^{-1}, 1)\, \text{ or } \, (1, \abs{\cdot}, \abs{\cdot}^{-n})$,  by \cite[Theorem 1.4]{HS} $D(\rmV)^{\und{\chi}}=1$.   
It finishes the proof.
\epf

Now we consider the case $n=2$.  We denote by ${\rm Q}_{1,0}$ the set $\{(x,y)|f(x,y)=0, x\not=0\}$ and ${\rm Q}_{0,1}$ the set $\{(x,y)|f(x,y)=0,  y \not=0\}$.  Note that  $\{{\rm Q}_{1,0}, {\rm Q}_{0,1}\}$ gives an open covering of ${\rm Q}\backslash\{0\}$,  ${\rm Q}^\times ={\rm Q}_{1,0}\cap {\rm Q}_{0,1}$,  ${\rm Q}_{1,0}={\rm Q}^\times \cup {\rm O}_{1,0}$ and ${\rm Q}_{0,1}={\rm Q}^\times \cup {\rm O}_{0,1}$.

\begin{lemma}
\label{case2lemma}
\ben
\item If $\und{\chi}=(\abs{\cdot}^2,\abs{\cdot}^{-1}, 1)$, then $\dim D(\rmQ_{1,0})^{\und{\chi}}=1$.
\item   If $\und{\chi}=(1,\abs{\cdot},  \abs{\cdot}^{-2})$, then $\dim D(\rmQ_{0,1})^{\und{\chi}}=1$.
\een
\end{lemma}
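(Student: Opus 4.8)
The plan is to prove part~(1) in detail and deduce part~(2) from it by a symmetry. Write $\und{\chi}=(|\cdot|^{2},|\cdot|^{-1},1)$. By Lemma~\ref{basic_lemma_ex2}, among the $\rmG(\rmF)$-orbits meeting $\rmQ_{1,0}=\rmQ^\times\cup\rmO_{1,0}$ exactly $\rmQ^\times$ and $\rmO_{1,0}$ are $\und{\chi}$-admissible, and $\rmO_{1,0}=\{(x,y)\in\rmQ_{1,0}\li y=0\}$ is closed in $\rmQ_{1,0}$ with open complement $\rmQ^\times$. The Bernstein--Zelevinsky localization sequence then yields a left-exact sequence
\[
0\to D(\rmO_{1,0})^{\und{\chi}}\to D(\rmQ_{1,0})^{\und{\chi}}\to D(\rmQ^\times)^{\und{\chi}},
\]
whose outer terms are each one-dimensional ($\und{\chi}$-admissible homogeneous spaces). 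Hence $\dim D(\rmQ_{1,0})^{\und{\chi}}\in\{1,2\}$, and it suffices to show the restriction map to $D(\rmQ^\times)^{\und{\chi}}$ is zero, i.e.\ that the (unique up to scalar) $\und{\chi}$-invariant distribution $\mu$ on $\rmQ^\times$ does not extend to $\rmQ_{1,0}$. I would get this from Corollary~\ref{cor_1}, applied not to the ambient $f=x\cdot y^{t}$ (which vanishes identically on $\rmQ_{1,0}$) but to a semi-invariant function on $\rmQ_{1,0}$ itself.

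The key step is this construction. For $x=(x_1,x_2)$ set $xJ:=(-x_2,x_1)$, with $J=\begin{pmatrix}0&1\\-1&0\end{pmatrix}$. If $(x,y)\in\rmQ_{1,0}$ then $x\neq 0$ and $x\cdot y^{t}=0$, so $y=t\,(xJ)$ for a unique scalar $t$; explicitly $t=y_2/x_1$ where $x_1\neq 0$ and $t=-y_1/x_2$ where $x_2\neq 0$, these agreeing on the overlap because $x_1y_1+x_2y_2=0$. Thus $(x,y)\mapsto t$ is a regular function on $\rmQ_{1,0}$, the map $(x,y)\mapsto(x,t)$ gives an isomorphism of varieties $\rmQ_{1,0}\cong(\rmF^{2}\setminus\{0\})\times\mathbb{A}^{1}$ carrying $\rmQ^\times$ to $(\rmF^{2}\setminus\{0\})\times\bbG_m$ and $\rmO_{1,0}$ to the zero section, and under it the $\rmG$-action becomes $(a,g,b)\cdot(x,t)=(axg^{-1},\,\lambda(a,g,b)\,t)$. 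Using the $2\times 2$ identity $Jg^{t}J^{-1}=(\det g)\,g^{-1}$ one reads off $\lambda(a,g,b)=(\det g)(ab)^{-1}$, an algebraic character of $\rmG$; so $t$ is $\lambda$-invariant with $V(t)=\rmO_{1,0}$, and we are in the setting of Corollary~\ref{cor_1} with $\sfX=\rmQ_{1,0}$, $f=t$, $\nu=\lambda$, $U=\rmQ^\times$.

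Next I would verify the hypotheses. Standardness: $\rmQ^\times$ is smooth, so with $\omega=dx_1\wedge dx_2\wedge dt$ an algebraic volume form in the trivializing coordinates, the example in Section~\ref{main_thm} gives that $Z_{t,|\omega|}$ is standard at every $s_0$, in particular at $s_0=-1$; since $\mu$ equals, up to a positive scalar, $dx_1\,dx_2\,d^{\times}t=|t|^{-1}|\omega|$ — which is $\und{\chi}$-invariant because $dx_1\,dx_2$ is $(|\cdot|^{2},|\cdot|^{-1},1)$-semi-invariant for $x\mapsto axg^{-1}$ and $d^{\times}t$ is scaling-invariant — the equivalence ``$Z_{t,\nu}$ standard at $s_0$ iff $Z_{t,|t|^{s_0}\nu}$ standard at $0$'' applied with $\nu=|\omega|,\ s_0=-1$ shows $Z_{t,\mu}$ is standard at $0$. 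Condition~(1) holds since $V(t)=\rmO_{1,0}$ is a single closed orbit, hence fiberizable over a point. For condition~(2): the $\rmG$-stabilizer of $(e_1,0)$ is $\{(a,\begin{pmatrix}a&0\\ \ast&\ast\end{pmatrix},b)\}$, on which $\chi_1^{m_1}\chi_2^{m_2}\chi_3^{m_3}$ restricts to $a^{m_1+m_2}s^{m_2}b^{m_3}$, trivial only for $m_1=m_2=m_3=0$; thus $\Psi_{\rmO_{1,0}}=0$ and $\lambda\notin\Psi_{\rmO_{1,0}}\otimes_{\bbZ}\bbQ$ since $\lambda\neq 1$. Finally, testing against $\phi=\mathbf{1}_{A\times\rmR}$ for a compact open $A\subset\rmF^{2}\setminus\{0\}$ of positive measure gives
\[
Z_{t,\mu}(\phi)=\mathrm{vol}(A)\int_{\rmR\setminus\{0\}}|t|^{s}\,d^{\times}t=\mathrm{vol}(A)\,\frac{1-q^{-1}}{1-q^{-s}},
\]
which has a simple pole at $s=0$; hence $Z_{t,\mu}$ has a pole at $s=0$, and Corollary~\ref{cor_1} forces $\mu$ not to extend to $\rmQ_{1,0}$. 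This gives $\dim D(\rmQ_{1,0})^{\und{\chi}}=\dim D(\rmO_{1,0})^{\und{\chi}}=1$. For part~(2), the automorphism $(a,g,b)\mapsto(b^{-1},(g^{t})^{-1},a^{-1})$ of $\rmG$ together with $(x,y)\mapsto(y,x)$ on $\rmV$ interchanges $\rmQ_{1,0}$ with $\rmQ_{0,1}$ and sends $(|\cdot|^{2},|\cdot|^{-1},1)$ to $(1,|\cdot|,|\cdot|^{-2})$, so part~(2) follows from part~(1).

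The step I expect to be the main obstacle is the key construction: $\rmQ_{1,0}$ is not visibly of the form ``$\sfX_f$'' handled by Corollary~\ref{cor_1}, and the point is to recognize it as the total space of the (trivial) orthogonal-complement line bundle over $\rmF^{2}\setminus\{0\}$, thereby manufacturing the semi-invariant coordinate $t$, its character $\lambda$, and the matching $\und{\chi}$-invariant measure $\mu$ with enough precision that standardness, fiberizability of $V(t)$, and $\lambda\notin\Psi_{\rmO_{1,0}}\otimes\bbQ$ all check out; the remaining stabilizer and Igusa-type integral computations are then routine.
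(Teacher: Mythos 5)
Your argument is correct, but it follows a genuinely different route from the paper's. The paper treats the projection $p:\rmQ_{1,0}\to\rmF^2\setminus\{0\}$, $(x,y)\mapsto x$, as a $\rmG$-equivariant fibration over a single orbit, applies Frobenius reciprocity to identify $D(\rmQ_{1,0})^{(|\cdot|^2,|\cdot|^{-1},1)}$ with a space of equivariant distributions on the fiber $p^{-1}(1,0)\simeq\rmF$ under the stabilizer $H$ (which acts by $y_2\mapsto b^{-1}g_{22}y_2$), and then quotes Tate's thesis to get dimension one in a single stroke. You instead run the localization sequence $0\to D(\rmO_{1,0})^{\und{\chi}}\to D(\rmQ_{1,0})^{\und{\chi}}\to D(\rmQ^\times)^{\und{\chi}}$ and reduce to a non-extendability statement, which you establish by trivializing the orthogonal-complement line bundle, extracting the semi-invariant coordinate $t$ with character $\lambda=(\det g)(ab)^{-1}$, and invoking Corollary \ref{cor_1} after verifying standardness, fiberizability of $V(t)=\rmO_{1,0}$, triviality of $\Psi_{\rmO_{1,0}}$, and the simple pole of $Z_{t,\mu}$ at $s=0$. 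The two reductions are essentially dual: your character $\lambda$ restricted to $H$ is exactly the character through which $H$ scales the fiber in the paper's proof, so both ultimately rest on the one-variable Tate computation. The paper's route is shorter; yours is longer but stays entirely inside the machinery of Theorem \ref{Main} and Corollary \ref{cor_1} (which is in the spirit of this section), and your closing symmetry $(a,g,b)\mapsto(b^{-1},(g^t)^{-1},a^{-1})$, $(x,y)\mapsto(y,x)$ correctly carries part (1) to part (2), just as the paper's ``the proof for part (2) is similar.''
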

\begin{proof}
We will only prove the part (1), as the proof for part (2) is similar.

Consider the projection map $p:  {\rm Q}_{1,0}\to  \rmF^2\backslash \{0\}$, given by $p(x_1,x_2,y_1,y_2)=(x_1,x_2)$.  The pre-image $p^{-1}(1,0)$ is $\{(1,0,0,y_2)| y_1\in \rmF \}\simeq \rmF$.  The stabilizer $H$ of $G$ at $(1,0)$ is
$$H=\{(a,\begin{bmatrix}a & 0  \\   g_{21} & g_{22} \end{bmatrix}, b)\li  \quad a,g_{21}, g_{22},b\in \rmF \}.$$
The action of $H$ on the fiber $p^{-1}(1,0)\simeq \rmF$ is given by
$$(a,\begin{bmatrix}a  & 0  \\   g_{21} & g_{22} \end{bmatrix}, b)\cdot y_2=b^{-1}g_{22}y_2. $$

By Frobenius reciprocity (cf.\cite[Section 1.5]{Be2}), there exists a natural isomorphism
$$D(\rmF)^{H}\simeq D({\rm Q}_{1,0})^{(\abs{\cdot}^2, \abs{\cdot}^{-1}, 1) }.$$
By Tate's thesis,  $\dim D(\rmF)^{H}=1$. It follows that  $\dim D({\rm Q}_{1,0})^{\abs{\cdot}^2, \abs{\cdot}^{-1}, 1) }=1$.
\end{proof}

\begin{proposition}
\label{case_2}
Assume that $n= 2$.   Then 
$\dim D(\rmV)^{\und{\chi}}=1$
 if and only if  
\be
\label{cond_1}
\chi_1\chi_3=1\, , \chi_2=1 \quad \text{ or } 
\ee
  \be
  \label{cond_2}
   \chi_1\chi_2=\abs{\cdot}, \chi_2\chi_3=\abs{\cdot}^{-1}    . 
   \ee
\end{proposition}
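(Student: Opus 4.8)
I would deduce everything from the orbit decomposition of $\rmV$, the Bernstein--Zelevinsky localization principle, the admissibility computations of Lemmas \ref{basic_lemma_ex2} and \ref{case2lemma}, and Corollary \ref{cor_1} applied to the Igusa integral $Z_{f,\chi(f)\mu}$. For \emph{necessity}: if $D(\rmV)^{\und{\chi}}\neq 0$ then by localization some $\rmG(\rmF)$-orbit in $\rmV$ is $\und{\chi}$-admissible, and Lemma \ref{basic_lemma_ex2} (with $n=2$) shows that admissibility on $\rmV_f$ gives (\ref{cond_1}), admissibility on $\{0\}$ gives $\und{\chi}=(1,1,1)$, which satisfies (\ref{cond_1}), and admissibility on $\rmQ^\times$, $\rmO_{1,0}$ or $\rmO_{0,1}$ gives (\ref{cond_2}) (the last two forcing the special members $(|\cdot|^2,|\cdot|^{-1},1)$ and $(1,|\cdot|,|\cdot|^{-2})$). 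Thus $D(\rmV)^{\und{\chi}}\neq0$ already implies (\ref{cond_1}) or (\ref{cond_2}), and the remaining work is to show the dimension is \emph{exactly} $1$ under each.

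\textbf{Sufficiency under (\ref{cond_1}).} Write $\und{\chi}=(\chi|\cdot|^2,1,\chi^{-1}|\cdot|^{-2})$, so $D(\rmV_f)^{\und{\chi}}=\bbC\,\chi(f)\mu$. If $\chi\notin\{|\cdot|^{-1},|\cdot|^{-2}\}$, then no orbit of $\rmQ$ is $\und{\chi}$-admissible, so $Z_{f,\chi(f)\mu}$ is holomorphic at $s=0$ by Proposition \ref{residue}; hence $\chi(f)\mu$ extends, $D(\rmQ)^{\und{\chi}}=0$, and $\dim D(\rmV)^{\und{\chi}}=1$. If $\chi=|\cdot|^{-2}$, i.e.\ $\und{\chi}=(1,1,1)$, then $\{0\}$ is the only $\und{\chi}$-admissible orbit of $\rmQ$ and $\Psi_{\{0\}}=0$; since (\ref{Igusa_ex2}) exhibits a simple pole of $Z_{f,|f|^{-2}\mu}$ at $s=0$, I would apply Theorem \ref{Main} with the filtration $\emptyset\subset\{0\}\subset\rmQ$ (so $\ell_\chi=1$) to get that the pole has order exactly $1$ and $|f|^{-2}\mu$ does not extend; therefore $D(\rmV)^{(1,1,1)}=D(\rmQ)^{(1,1,1)}=D(\{0\})^{(1,1,1)}=\bbC\,\delta_0$. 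The remaining value $\chi=|\cdot|^{-1}$, i.e.\ $\und{\chi}=(|\cdot|,1,|\cdot|^{-1})$, also satisfies (\ref{cond_2}) and is handled next.

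\textbf{Sufficiency under (\ref{cond_2}), upper bound.} Write $\und{\chi}=(|\cdot|\chi_2^{-1},\chi_2,|\cdot|^{-1}\chi_2^{-1})$; by Lemma \ref{basic_lemma_ex2} the $\und{\chi}$-admissible orbits of $\rmV$ are $\rmQ^\times$ always, plus $\rmV_f$ if $\chi_2=1$, plus $\rmO_{1,0}$ if $\chi_2=|\cdot|^{-1}$, plus $\rmO_{0,1}$ if $\chi_2=|\cdot|$, with $\{0\}$ never admissible. When $\chi_2\neq 1$, localization gives $D(\rmV)^{\und{\chi}}=D(\rmQ)^{\und{\chi}}$, and since $\rmQ_{1,0}$ (resp.\ $\rmQ_{0,1}$) is open in $\rmQ$ with complement carrying no admissible orbit once $\chi_2\neq|\cdot|$ (resp.\ $\chi_2\neq|\cdot|^{-1}$), I would embed $D(\rmQ)^{\und{\chi}}$ into $D(\rmQ_{1,0})^{\und{\chi}}$ or $D(\rmQ_{0,1})^{\und{\chi}}$; these are $1$-dimensional by Lemma \ref{case2lemma} in the boundary cases, and by the same Frobenius-reciprocity computation in general --- the $\rmG(\rmF)$-map $\rmQ_{1,0}\to\rmF^2\setminus\{0\}=\rmG(\rmF)/H$ identifies $D(\rmQ_{1,0})^{\und{\chi}}$ with a space of $\psi$-invariant distributions on $\rmF$ for the scaling action of $\rmF^\times$ and a suitable $\psi$, which is $1$-dimensional by Tate's thesis. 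When $\chi_2=1$, I would instead compute $\Psi_{\rmQ^\times}=\bbZ\cdot(-1,1,-1)$ from the stabilizer of a point of $\rmQ^\times$, so that the character $\nu$ of $f$, corresponding to $(1,0,-1)$, lies outside $\Psi_{\rmQ^\times}\otimes_{\bbZ}\bbQ$; then, since (\ref{Igusa_ex2}) gives a pole of $Z_{f,|f|^{-1}\mu}$ at $s=0$, Corollary \ref{cor_1} forces $|f|^{-1}\mu$ not to extend, so $D(\rmV)^{(|\cdot|,1,|\cdot|^{-1})}=D(\rmQ)^{(|\cdot|,1,|\cdot|^{-1})}$. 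In all subcases $\dim D(\rmV)^{\und{\chi}}\leq1$.

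\textbf{Sufficiency under (\ref{cond_2}), lower bound --- the hard part.} I must produce a nonzero $\und{\chi}$-invariant distribution on $\rmV$. For the special characters $(|\cdot|,1,|\cdot|^{-1})$, $(|\cdot|^2,|\cdot|^{-1},1)$, $(1,|\cdot|,|\cdot|^{-2})$ I would invoke \cite[Theorem~1.4]{HS} (as in the proof of Proposition \ref{case1}), or, for the first, take the residue at $s=0$ of $Z_{f,|f|^{-1}\mu}$, nonzero by (\ref{Igusa_ex2}). The generic case $\chi_2\neq1,|\cdot|,|\cdot|^{-1}$ is the crux: here $\rmQ^\times$ is the unique admissible orbit, and its $\und{\chi}$-invariant distribution $\xi_0$ extends to nonzero invariant distributions on the line bundles $\rmQ_{1,0}$ and $\rmQ_{0,1}$, because for generic $\chi_2$ the character $\psi$ above is nontrivial (equivalently $\und{\chi}$ is not admissible on the zero sections $\rmO_{1,0},\rmO_{0,1}$), so by Tate the generator of $D(\rmQ_{1,0})^{\und{\chi}}$ has nonzero restriction to $\rmQ^\times$. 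Extending by zero over $\{x=0\}$ and $\{y=0\}$ gives nonzero $\und{\chi}$-invariant distributions on the open sets $\{x\neq0\}$ and $\{y\neq0\}$ whose restrictions to $\{x\neq0,\ y\neq0\}$ are nonzero multiples of $\xi_0$; after rescaling, Mayer--Vietoris for the cover $\rmV\setminus\{0\}=\{x\neq0\}\cup\{y\neq0\}$ glues them to a nonzero $\eta\in D(\rmV\setminus\{0\})^{\und{\chi}}$. Finally $\und{\chi}$ is nontrivial on the centre of $\rmG$ (as $\chi_1=|\cdot|\chi_2^{-1}\neq1$), so $H^1(\rmG(\rmF),\und{\chi})=0$ and $\eta$ extends over the origin; together with the upper bound this gives $\dim D(\rmV)^{\und{\chi}}=1$. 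I expect this generic case to be the main obstacle, precisely because $f$ and its powers are essentially the only semi-invariants on $\rmV$, so no Igusa integral reaches these characters with $\chi_2\neq1$, and the distribution has to be built by hand from the bundle geometry of $\rmQ_{1,0},\rmQ_{0,1}$, Tate's thesis, a Mayer--Vietoris patching, and the vanishing of $H^1$ across the origin.
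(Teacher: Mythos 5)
Your proof is correct and follows the paper's strategy in all the essential steps: Bernstein--Zelevinsky localization for necessity and for the upper bounds; the computation $\Psi_{\rmQ^\times}=\bbZ\cdot(-1,1,-1)$ with $\nu\notin\Psi_{\rmQ^\times}\otimes_{\bbZ}\bbQ$, combined with Corollary \ref{cor_1} and the pole exhibited by (\ref{Igusa_ex2}), to rule out extension of $|f|^{-1}\mu$ at the critical character $(|\cdot|,1,|\cdot|^{-1})$; the argument of Proposition \ref{case1} for $(1,1,1)$; and Lemma \ref{case2lemma} for the two boundary characters. The one genuine divergence is the generic case of (\ref{cond_2}) with $\chi_2\neq 1,\abs{\cdot},\abs{\cdot}^{-1}$, which you single out as the crux and treat by hand: Tate's thesis on the fibers of $\rmQ_{1,0}\to\rmF^2\setminus\{0\}$ and $\rmQ_{0,1}\to\rmF^2\setminus\{0\}$ to extend $\xi_0$ across the zero sections, a Mayer--Vietoris gluing over the $\rmG(\rmF)$-stable cover $\rmQ\setminus\{0\}=\rmQ_{1,0}\cup\rmQ_{0,1}$, and the vanishing of $\Ext^1_{\rmG(\rmF)}(\bbC,\und{\chi})$ forced by the nontrivial central character to cross the origin. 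The paper instead dispatches this case, together with all characters outside the four exceptional ones, in one line by citing the automatic extension theorem \cite[Theorem 1.4]{HS}, which extends the invariant distribution directly from the locally closed orbit $\rmQ^\times$ to $\rmV$. Both routes are valid: yours is self-contained and makes the mechanism of extension visible (nontriviality of the fiberwise character and of the central character), at the cost of length; the paper's is shorter but leaves the verification of the hypotheses of \cite[Theorem 1.4]{HS} implicit. Consequently your framing of the generic case as the main obstacle is somewhat misplaced relative to the paper, where the only case genuinely requiring the machinery of Theorem \ref{Main} is $(|\cdot|,1,|\cdot|^{-1})$ --- which you handle exactly as the paper does.
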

\bpf
When the conditions (\ref{cond_1}) and $(\ref{cond_2})$ do not hold,  Lemma \ref{basic_lemma_ex2} and Theorem \ref{Localization} implies that $D(\rmV)^{\und{\chi}}=0$.

By automatic extension theorem (cf.\cite[Theorem 1.4]{HS}),  it is easy to check that  if 
\[\und{\chi}\neq  (1,1,1), (\abs{\cdot}, 1, \abs{\cdot}^{-1}   ),  (\abs{\cdot}^2, \abs{\cdot}^{-1},  ),  (1, \abs{\cdot},  \abs{\cdot}^{-2})  \]
we have $\dim D(\rmV)^{\und{\chi}}=1$.

When $\und{\chi}=(1,1,1)$, by the same reasoning as in Proposition \ref{case1},  $\dim D(\rmV)^{\und{\chi}}=1$.

When  $\und{\chi}=(\abs{\cdot}, 1, \abs{\cdot}^{-1}   )$. The character $\und{\chi}$ is admissible on $\rmV_f$ and also on $\rmQ^\times$. 
By an easy computation any admissible algebraic character $\rmG$ on $\rmQ^\times$ can be written as 
\[ (a,g, b)\mapsto   a^m \det(g)^n b^k ,\]
where $m,n,k\in \bbZ$ and $m+n=0,n+k=0$.   Hence $\nu $  is not admissible on $\rmQ^\times$.  In view of (\ref{Igusa_ex2}), $Z_{f,|f|^{-1}\mu}$ has pole at $s=0$.  By  Theorem \ref{Main} or Corollary \ref{cor_1},  $Z_{f, |f|^{-1}\mu}$ has simple pole at $s=0$ and $\abs{f}^{-1}\mu$ can not be extended to $\rmV$ as a $(\abs{\cdot},1, \abs{\cdot}^{-1})$-invariant distribution.  
On the other hand,  by automatic extension theorem  (cf.\cite[Theorem 1.4]{HS} )  the $(\abs{\cdot}, 1, \abs{\cdot}^{-1}   )$-invariant distribution supported on $\rmQ^\times$ can be uniquely extended to $\rmV$.   Hence $D(\rmV)^{\und{\chi}}=1$.

When $\und{\chi}=(\abs{\cdot}^2, \abs{\cdot}^{-1},  )$ or  $ (1, \abs{\cdot},  \abs{\cdot}^{-2}) $,  by Lemma \ref{case2lemma} and automatic extension theorem (cf.\cite[Theorem 1.4]{HS} ) we have $D(\rmV)^{\und{\chi}}=1$. 

\epf

In the end, we assume  $n=1$. 

\bl
\label{case3lem}
The only $\rmG$-invariant distribution on $\rmV$ is the delta distribution supported on $\{0\}$.
\el
\bpf
The trivial character $\und{\chi}=(1,1,1)$  is admissible on $\rmF^\times \times \rmF^\times$, $\rmF^\times \times \{0\}$, $\{0\}\times \rmF^\times$ and $\{(0,0)\}$. By Tate thesis, $\und{\chi}$-invariant distribution on $\rmF^\times \times \{0\}$ and $\rmF^\times \times \{0\}$ are not extendable. 

We only need to check the measure $\mu=\frac{dxdy}{\abs{xy}}$ is not extendable.   Assume it can be extended to an invariant distribution $\tilde{\mu}$ on $\rmF\times \rmF$.

By simple computation, the zeta integral $Z_{xy, \frac{dxdy}{\abs{xy}}}$ has pole at $s=0$ of order $2$. Consider the Laurent expansion, 
\[   Z_{xy, \frac{dxdy}{\abs{xy}}}=Z_{-2}s^{-2}+Z_{-1}s^{-1}+Z_0+\cdots . \]
 $\tilde{\mu}-Z_0$ is supported on $\rmF\times \{0\}\cup \{0\}\times \rmF $.  It is clear there exists $\alpha,\beta\in \bbC$ such that
 $\tilde{\mu}-Z_0-\alpha Z^x_{0}-\beta Z^y_{0}$ is supported at $\{(0,0)\}$, where $Z^x_{0}$ (resp. $Z^y_0$) is the 0-th coefficient of the zeta integral $Z_{x,\frac{dx}{\abs{x}}}$ (resp. $Z_{y, \frac{dy}{\abs{y}}}$) on $\rmF\times \{0\}$ (resp.  on $\{0\}\times \rmF$). Hence  $\tilde{\mu}-Z_0-\alpha Z^x_{0}-\beta Z^y_{0}$ is an invariant distribution.  
For any $a,b\in \rmF$ such that $\abs{a},\abs{b}\neq 1$,  we have 
\[ (a-1)(b-1)  (\tilde{\mu}-Z_0-\alpha Z^x_{0}-\beta Z^y_{0})=0 . \]
On the other hand
\begin{align} & (a-1)(b-1)  \cdot (\tilde{\mu}-Z_0- \alpha Z^x_{0}-\beta Z^y_{0}) \\
 &=  (a-1)(b-1) \cdot \tilde{\mu}- (a-1)(b-1)Z_0- (a-1)(b-1)( \alpha Z^x_{0}+\beta Z^y_{0})\\
 &= -\log\abs{a}\log \abs{b}Z_{-2}\neq 0.
\end{align}
 It is a contradiction. It follows that $\frac{dxdy}{\abs{xy}}$ can not be extended to $\rmV$ as an invariant distribution, and the only invariant distribution on $\rmV$ is the delta distribution supported on $\{0\}$.
\epf
\begin{proposition}
\label{case_3}
Assume that $n= 1$.   Then $\dim D(\rmV)^{\und{\chi}}=1$ if and only if  
\[  \chi_1\chi_2\chi_3=1 . \]
  \end{proposition}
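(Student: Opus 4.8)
The plan is to treat the open orbit $\rmV_f$ and the boundary $V(f)$ separately, and then, according to which boundary orbits are $\und{\chi}$-admissible, decide whether the semi-invariant distribution on $\rmV_f$ extends to $\rmV$; the single case where Theorem~\ref{Main} does not apply, namely $\und{\chi}=(1,1,1)$, is already covered by Lemma~\ref{case3lem}. For the ``only if'' direction I would argue: if $\chi_1\chi_2\chi_3\neq 1$, then every admissibility condition in the $n=1$ clauses of Lemma~\ref{basic_lemma_ex2} forces $\chi_1\chi_2\chi_3=1$, so $\und{\chi}$ is admissible on none of the orbits $\rmV_f,\rmO_{1,0},\rmO_{0,1},\{0\}$, and the Bernstein--Zelevinsky localization principle gives $D(\rmV)^{\und{\chi}}=0$.

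Now assume $\chi_1\chi_2\chi_3=1$, so $\und{\chi}$ is admissible on $\rmV_f$ and $\dim D(\rmV_f)^{\und{\chi}}=1$; fix a nonzero $\und{\chi}$-invariant distribution $\mu_{\und{\chi}}$ on $\rmV_f$, say a scalar multiple of $\chi_1(x)\chi_3(y)^{-1}\tfrac{dx\,dy}{\abs{xy}}$. From the exact sequence $0\to D(V(f))^{\und{\chi}}\to D(\rmV)^{\und{\chi}}\to D(\rmV_f)^{\und{\chi}}$ I must determine $D(V(f))^{\und{\chi}}$ and whether $\mu_{\und{\chi}}$ extends. By Lemma~\ref{basic_lemma_ex2} exactly one of three cases holds, according to whether both, neither, or exactly one of $\chi_1,\chi_3$ is trivial. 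If both are trivial then $\und{\chi}=(1,1,1)$, and the assertion is precisely Lemma~\ref{case3lem}: $D(\rmV)^{(1,1,1)}$ is spanned by the delta distribution at the origin. If neither is trivial, then no orbit contained in $V(f)$ is $\und{\chi}$-admissible, so $D(V(f))^{\und{\chi}}=0$ and, by Proposition~\ref{residue}, $Z_{f,\mu_{\und{\chi}}}$ is analytic at $s=0$, whence $\mu_{\und{\chi}}$ extends --- uniquely --- to $\rmV$; thus $\dim D(\rmV)^{\und{\chi}}=1$.

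The remaining case is that exactly one of $\chi_1,\chi_3$ is trivial; by the symmetry exchanging the two copies of $\rmF$ (together with $(a,g,b)\leftrightarrow(b^{-1},g^{-1},a^{-1})$) I may assume $\chi_1=1$ and $\chi_3\neq 1$, so $\chi_2=\chi_3^{-1}\neq 1$ and $\und{\chi}=(1,\chi_3^{-1},\chi_3)$; the unique $\und{\chi}$-admissible orbit in $V(f)$ is then $\rmO_{0,1}$. First I would localize along the two coordinate lines of $V(f)=(\rmF\times\{0\})\cup(\{0\}\times\rmF)$, which are closed and $\rmG$-stable, noting that $\rmO_{1,0}$ and $\{0\}$ are not $\und{\chi}$-admissible; this yields $D(V(f))^{\und{\chi}}\hookrightarrow D(\rmO_{0,1})^{\und{\chi}}$, hence $\dim D(V(f))^{\und{\chi}}\leq 1$. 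Next I would show $\mu_{\und{\chi}}=\chi_3(y)^{-1}\tfrac{dx\,dy}{\abs{xy}}$ does not extend to $\rmV$ by invoking Corollary~\ref{cor_1}: its hypotheses hold because $\rmO_{0,1}$ is a single closed orbit (fiberizable via the constant map) and $\nu=ab^{-1}$ does not lie in $\Psi_{\rmO_{0,1}}\otimes_\bbZ\bbQ$ --- indeed $\Psi_{\rmO_{0,1}}=\{(0,m,-m):m\in\bbZ\}$ has trivial first entry, whereas $\nu$ does not --- and the zeta integral $Z_{f,\mu_{\und{\chi}}}$ really does have a pole at $s=0$: for $\phi=\phi_1\otimes\phi_2$ it factors as the product of $\int\phi_1(x)\abs{x}^s\tfrac{dx}{\abs{x}}$, which has a simple pole at $s=0$, with $\int\phi_2(y)\abs{y}^s\chi_3(y)^{-1}\tfrac{dy}{\abs{y}}$, which (since $\chi_3\neq 1$) is holomorphic and, for suitable $\phi_2$, nonzero at $s=0$ by Tate's thesis. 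By Corollary~\ref{cor_1} the pole is then simple and $\mu_{\und{\chi}}$ does not extend, so the map $D(\rmV)^{\und{\chi}}\to D(\rmV_f)^{\und{\chi}}$ vanishes; but by Lemma~\ref{igu_inv} the residue of $Z_{f,\mu_{\und{\chi}}}$ is a nonzero $\und{\chi}$-invariant distribution on $\rmV$, so $\dim D(\rmV)^{\und{\chi}}=\dim D(V(f))^{\und{\chi}}=1$.

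The main obstacle is the trivial-character case $\und{\chi}=(1,1,1)$. There $Z_{xy,\,dx\,dy/\abs{xy}}$ has a pole of order $2$ at $s=0$, yet Theorem~\ref{Main} and its corollaries give nothing: in any $\rmG$-stable filtration of $V(f)$, the stratum meeting the origin violates condition~(2) of Theorem~\ref{Main}, since $\nu=ab^{-1}$ lies in $\Psi_{\rmG}\otimes_\bbZ\bbQ$ --- in fact even in the $\bbQ$-span of $\Psi_{\rmO_{1,0}}$ and $\Psi_{\rmO_{0,1}}$, both of which consist of characters whose exponent sum vanishes --- so the unextendability of $dx\,dy/\abs{xy}$ cannot be read off from pole orders. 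That is precisely why this case requires the direct residue computation carried out in Lemma~\ref{case3lem}; the other two cases are routine given the machinery of Sections~\ref{main_thm}--\ref{final}.
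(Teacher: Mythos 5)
Your proposal is correct and follows essentially the same route as the paper: vanishing via Bernstein--Zelevinsky localization when $\chi_1\chi_2\chi_3\neq 1$, extension of the open-orbit distribution when no boundary orbit is admissible, unextendability via a simple pole and Corollary \ref{cor_1} when exactly one of $\chi_1,\chi_3$ is trivial, and Lemma \ref{case3lem} for $\und{\chi}=(1,1,1)$; the only differences are cosmetic (you work with $f=xy$ and $\nu=ab^{-1}$ where the paper uses $f=x$ and $\nu(a,g,b)=ag^{-1}$, you invoke Proposition \ref{residue} where the paper cites the automatic extension theorem of \cite{HS}, and you produce the boundary distribution as the residue rather than by Tate extension along the closed line). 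One inaccuracy in your closing commentary: it is not true that condition (2) of Theorem \ref{Main} fails for every filtration in the trivial-character case, since $\Psi_{\{0\}}$ is trivial (the stabilizer of the origin is all of $\rmG$), so the filtration $\{0\}\subset \rmF\times\{0\}\subset V(f)$ satisfies both hypotheses with $\ell_\chi=3$; the theorem is inconclusive there only because the actual pole order $2$ is strictly smaller than $\ell_\chi$ (and coarser filtrations fail condition (1)). This does not affect your proof, since that case is anyway settled by Lemma \ref{case3lem}.
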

\bpf

When  $\chi_1\chi_2\chi_3\neq 1$,  Lemma \ref{basic_lemma_ex2} and Theorem \ref{Localization} implies that $\dim D(\rmV)^{\und{\chi}}=0$.

We now assume that $\chi_1\chi_2\chi_3= 1$.   if $\chi_1\chi_2\neq 1$ or $\chi_2\chi_3\neq 1$, then by automatic extension theorem (cf.\cite[Theorem 1.4]{HS}), we have $\dim D(\rmV)^{\und{\chi}}=1$.

When $\und{\chi}=(1, \chi, \chi^{-1})$ where $\chi$ is not trivial.  
Consider the $\und{\chi}$-invariant distribution  $\chi(y)\frac{dxdy}{|xy|}$.  If $\chi\neq 1$, then the zeta integral $Z_{x, \chi(y)\frac{dxdy}{|xy|}}$ has pole at $s=0$.  The character $\und{\chi}$ is admissible on $V_f$ and $\{0\}\times \rmF^\times$.  Note that the algebraic character  $\nu  (a,g,b)=  ag^{-1} $
is not admissible on $\{0\}\times \rmF^\times$.   By Corollary \ref{cor_1},  $\chi(y)\frac{dxdy}{\abs{xy}}$ is not extendable.  Therefore the only $\und{\chi}$-invariant distribution on $\rmV$ is the distribution $\chi(y)\frac{dy}{\abs{y}}$ supported on $\{0\}\times \rmF^\times$. 

By the same argument, when $\und{\chi}=(\chi, \chi^{-1},1)$ where $\chi\neq 1$,  the distribution  $\chi(x)\frac{dxdy}{\abs{xy}}$  is not extendable as an $\und{\chi}$-invariant distribution on $\rmV$. Therefore the only $\und{\chi}$-invariant distribution on $\rmV$ is the distribution $\chi(x)\frac{dx}{\abs{x}}$ supported on $\rmF^\times \times \{0\}$.

When $\und{\chi}=(1,1,1)$,  the Propotion follows from Lemma \ref{case3lem}.

\epf
%
%
%

\appendix
\section{Extension of intertwining operators and residues (by Shachar Carmeli and Jiuzu Hong)}
\label{appendix}
 In this appendix, we give a homological algebra interpretation of the main result of this paper by relating residues and 1-cocycles.  We work with smooth representations of $\ell$-groups in this appendix.  It would be interesting to see which of the results presented here hold in the archimedean case as well. 

\subsection{Smooth Cohomology of $\ell$-groups}

In this subsection we  recall some basics of smooth cohomology of an $\ell$-group with coefficients in a smooth representation. We  also discuss Ext-groups between smooth representations. For more general theory of smooth cohomology of $\ell$-groups, one can refer to  \cite{BW}. 

For a smooth representation $(V,\rho)$ of an $\ell$-group $G$, we  recall now the construction of the standard injective resolution of $V$ as a smooth $G$-module. 

Let $C^i(G,V)$ denote the linear space of functions $\phi: G^{i + 1} \to V$ which are locally constant 
on $G^{i + 1}$. The group $G$ acts on $C^i(G,V)$ by 
\[(g\cdot \phi)(g_0,...,g_i) := \rho(g)\cdot \phi(g^{-1}g_0,...,g^{-1}g_i).\] 
There is a natural differential $d : C^i(G,V) \to C^{i + 1}(G,V)$ defined by 
\[ (d\phi) (g_0,...,g_{i + 1}) = \sum_{j = 0}^{i + 1} (-1)^j \phi(g_0,...,\hat{g_j},...,g_{i + 1}).\] 
This way $C^\bullet(G,V)$ becomes a complex.   Let $C^i(G,V)^\infty$ be the space of smooth vectors of $C^i(G,V)$ with respect to the action of $G$, i.e. the set of elements $\phi \in C^i(G,V)$ for which there exists an open  subgroup $K$ of $G$ such that 
\[ \phi(kg_0,kg_1,\cdots, kg_i)=\rho(k)\cdot \phi(g_0,g_1,\cdots, g_i),     \]
for any $k\in K$ and $g_0,g_1,\cdots, g_i\in G$.

From now on we will identify a representation $(V,\rho)$ with its representation space, and denote 
$\rho(g) v$ simply by $g v$. If $V,U$ are two representations, we will always assume that the action of 
$G$ on ${\rm Hom}(V,U)$ is given by $(g\cdot f)(v) = gf(g^{-1}v)$.

\begin{lemma}
For any $i\geq 0$, $C^i(G,V)^\infty$ is an injective smooth representation of $G$.
\end{lemma}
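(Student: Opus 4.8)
The plan is to realize $C^i(G,V)^\infty$ as a coinduced module from the trivial subgroup and to deduce injectivity from the exactness of the restriction functor. The heart of the argument is a Frobenius-type reciprocity: for every smooth representation $\pi$ of $G$ I claim there is a natural isomorphism of vector spaces
\[
   \Hom_G\!\big(\pi,\, C^i(G,V)^\infty\big)\;\cong\;\Hom_{\mathbb{C}}\!\big(\pi,\, W_i\big),
\]
where $W_i$ is the vector space of all locally constant functions $G^i\to V$ (with the convention $G^0=\{\mathrm{pt}\}$, so $W_0=V$), and on the right $\pi$ is viewed merely as a complex vector space. Granting this, the functor $\pi\mapsto \Hom_G(\pi, C^i(G,V)^\infty)$ factors as the forgetful functor from smooth $G$-modules to vector spaces followed by $\Hom_{\mathbb{C}}(-,W_i)$. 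The forgetful functor is exact, and $\Hom_{\mathbb{C}}(-,W_i)$ is exact because every vector space is an injective module over a field. Hence $\Hom_G(-, C^i(G,V)^\infty)$ is exact on the category of smooth $G$-representations, which is precisely the assertion that $C^i(G,V)^\infty$ is injective.

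To construct the isomorphism I would use the change of variables $(g_0,\dots,g_i)\mapsto(g_0,\,g_0^{-1}g_1,\dots,g_0^{-1}g_i)$ on $G^{i+1}$, under which the diagonal $G$-action becomes left translation on the first coordinate and is trivial on the remaining ones. From a $G$-map $T:\pi\to C^i(G,V)^\infty$ one extracts the linear map $\Lambda_T:\pi\to W_i$, $\Lambda_T(v):=\big((g_1,\dots,g_i)\mapsto T(v)(e,g_1,\dots,g_i)\big)$; conversely, from a linear map $\Lambda:\pi\to W_i$ one builds a cochain by
\[
   T_\Lambda(v)(g_0,\dots,g_i):=\rho(g_0)\,\Lambda(g_0^{-1}v)\big(g_0^{-1}g_1,\dots,g_0^{-1}g_i\big).
\]
A short computation using $G$-equivariance of $T$ shows $T_{\Lambda_T}=T$ and $\Lambda_{T_\Lambda}=\Lambda$, and naturality in $\pi$ is immediate. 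In fact this identifies $C^i(G,V)^\infty$ with the smoothly induced module $\mathrm{Ind}_{\{1\}}^G W_i$, so the conclusion may equally be phrased via: $\mathrm{Ind}_{\{1\}}^G(-)$ is right adjoint to the exact restriction functor $\mathrm{Res}^G_{\{1\}}$ and therefore preserves injective objects, while every vector space is an injective $\{1\}$-module.

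The step I expect to be the main obstacle is verifying that $T_\Lambda(v)$ genuinely lies in $C^i(G,V)^\infty$ for an arbitrary linear $\Lambda$ — that it is locally constant on $G^{i+1}$ and is fixed by some open subgroup of $G$. Both points rest on the smoothness of $\pi$ and of $V$. For local constancy: since $v$ is fixed by an open $K\le G$, the assignment $g_0\mapsto g_0^{-1}v$ is locally constant, so near a given point of $G^{i+1}$ the element $\Lambda(g_0^{-1}v)\in W_i$ is a single fixed function; combining this with the local constancy of that function on $G^i$ and of $g_0\mapsto\rho(g_0)w$ on $G$ gives local constancy of $T_\Lambda(v)$. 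For smoothness as a vector, a direct computation gives $h\cdot T_\Lambda(v)=T_\Lambda(hv)$ for all $h\in G$, so $T_\Lambda(v)$ is fixed by the open stabilizer of $v$; the same identity re-establishes that $T_\Lambda$ is a morphism of smooth $G$-modules. These verifications are elementary but are exactly where the care is required.
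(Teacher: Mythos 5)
Your proof is correct and follows essentially the same route as the paper: both identify $C^i(G,V)^\infty$ with the smooth induction $\mathrm{Ind}_{\{e\}}^G$ of the vector space of locally constant functions $G^i\to V$ (your $W_i$ is exactly the paper's $C^{i-1}(G,V)$) and then conclude by Frobenius reciprocity together with exactness of restriction to the trivial subgroup. Your version simply makes the untwisting isomorphism and the smoothness/local-constancy verifications more explicit, where the paper factors the identification through $C^0(G,C^{i-1}(G,V))^\infty$.
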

\begin{proof}
It is easy to check that the following linear map 

\[ \begin{array}{rcl}
 C^0(G,W)^\infty   &\rightarrow&   {\rm Ind}_{\{e\}}^G( W|_{\{e\}} )  ,\\
   \phi   &\mapsto&(g\mapsto g^{-1}\cdot  \phi(g) )
   \end{array}\]
   is a well-defined isomorphism of smooth representations of $G$ for any smooth representation $W$ of $G$, where ${\rm Ind}$ is the smooth induction functor.  
   
   Observe that   $C^i(G,V)^\infty$ is naturally isomorphic to $C^0(G, C^{i-1} (G,V) )^\infty$, where by convention $C^{-1}(G,V):=V$.
   Hence  $C^i(G,V)^\infty$ is isomorphic to ${\rm Ind}^G_{\{e\}} (C^{i-1}(G,V)  ) $ as smooth representations of $G$.  Frobenius reciprocity for the smooth induction ${\rm Ind}$ then implies that  $C^i(G,V)^\infty$ is an injective representation of $G$.  
\end{proof}

Let $C^\bullet(G,V)^\infty$ denote the sub-complex $0 \to  C^0(G,V)^\infty\to  \cdots  \to C^i(G,V)^\infty\to \cdots  $ of $C^\bullet(G,V)$.  It is  standard to check that  $V\to C^\bullet(G,V)^\infty$ is an
injective resolution of $V$, where the map $V\to C^0(G,V)^\infty$ is  given by $v\mapsto  \{g\mapsto  g v  \}$.   Thus, by the definition of $Ext^i$, for any two smooth representations $U,V$ of $G$ we have 
\begin{equation}
\label{cohomology formula}
{\rm Ext}^i_G(U,V) \cong H^i({\rm Hom}_G(U,C^\bullet(G,V)^\infty )).      
\end{equation}

Here ${\rm Ext}^i_G$ stands for Ext-groups in the category of smooth $G$-representations.  
One can check easily that the space ${\rm Hom}_G(U,C^i(G,V)^\infty)$ can be identified with the space $C_G^i(U,V)$ consisting  of all maps   $\phi: G^{i+1}\times U\to V$ 
with the following properties:
\begin{itemize}
\item $\phi$ is linear in $U$.
\item  $\phi$ is $G$-equivariant, i.e. for any $g, g_0,\cdots, g_i\in G$, 
\[ \phi(gg_0,gg_1,\cdots, gg_i,g\cdot u)=g\phi(g_0,g_1,\cdots, g_i, u).   \]
\item  For any $u \in U$, $\phi(\cdot, u)$ is a locally constant $V$-valued function.  
\end{itemize}
Let $C^i(U,V)$ be the space consisting of all maps $\psi:  G^i\times U\to V$ with the following properties:
\begin{itemize}
\item  $\psi$ is linear in $U$.
\item  for any $u\in U$,  $\psi(\cdot,u)$ is a locally constant $V$-valued function.
\end{itemize}

Any  map $\phi\in C^{i}_G(U,V)$ can be uniquely associated to  $\tilde{\phi}\in C^i(U,V)$ as follows:
\[ \tilde{\phi}(g_1,\cdots, g_i,u):=\phi(e,g_1,\cdots, g_i,u),\quad \text{ for any } g_1,\cdots,g_i\in G, u\in U . \]
Conversely for any   $\psi  \in C^i(U,V)$, we can uniquely associate an element  $\bar{\psi}  \in C^i_G(U,V)$ as follows,
\[ \bar{\psi}(g,g_1,\cdots, g_i,u):=g\cdot\psi(g^{-1}g_1,\cdots, g^{-1}g_i, g^{-1}u  ), \quad  \text{for any }g,g_1,\cdots, g_i\in G, u\in U.   \]

\begin{definition}
	\label{cocycle and coboundary}
Let $U,V$ be two smooth representations $U,V$ of $G$.
\begin{enumerate}
\item  We call a map $\psi  \in  C^1(U,V)$ a 1-cocycle from $U$ to $V$ if 
\[  \psi(g_1g_2,u)=g_1\cdot \psi(g_2,g_1^{-1}(u))+\psi(g_1,u), \quad \text{for any } g_1,g_2\in G, \text{ and } u\in U.\]
\item  We call a map  $\psi \in C^1(U,V)$ a 1-coboundary from $U$ to $V$ if $\psi(g,u)=g \xi(g^{-1}u) - \xi(u)$ for some $\xi\in {\rm Hom}(U,V)$.  
\end{enumerate}
\end{definition}

By definition, a 1-coboudnary is clearly a 1-cocyle.  Let $Z^1(U,V)$ (resp. $B^1(U,V)$) denote the space of all 1-cocycles (resp. 1-coboundaries) from $U$ to $V$.  
\begin{lemma}
\label{lem_cocycle}
The group ${\rm Ext}^1_G(U, V)$ is naturally isomorphic to the quotient 
$$Z^1(U,V)/ B^1(U,V).$$  
\end{lemma}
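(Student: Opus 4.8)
The plan is to unwind the isomorphism \eqref{cohomology formula} in cohomological degree one through the two identifications ${\rm Hom}_G(U, C^i(G,V)^\infty) \cong C^i_G(U,V) \cong C^i(U,V)$ set up above, and then to recognise, after transporting the simplicial differential to the complex $C^\bullet(U,V)$, that the $1$-cocycles are exactly the kernel of $C^1(U,V)\to C^2(U,V)$ and the $1$-coboundaries are exactly the image of $C^0(U,V)\to C^1(U,V)$. Since $\phi\mapsto\tilde\phi$ is a linear bijection $C^i_G(U,V)\xrightarrow{\sim}C^i(U,V)$ with inverse $\psi\mapsto\bar\psi$, it suffices to compute the differential it induces on $C^\bullet(U,V)$, namely $(d^i\psi)(g_1,\dots,g_{i+1},u):=\bigl(d(\bar\psi)\bigr)(e,g_1,\dots,g_{i+1},u)$, where $d$ denotes the alternating-sum differential $(d\phi)(g_0,\dots,g_{i+1},u)=\sum_{j=0}^{i+1}(-1)^j\phi(g_0,\dots,\widehat{g_j},\dots,g_{i+1},u)$ on $C^\bullet_G(U,V)$. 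Note also that since $G^0$ is a point one has $C^0(U,V)={\rm Hom}(U,V)$, which is the space appearing in the definition of a $1$-coboundary.

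Next I would carry out the two transport computations explicitly. For $\xi\in C^0(U,V)={\rm Hom}(U,V)$ one has $\bar\xi(g_0,u)=g_0\xi(g_0^{-1}u)$, so
\[(d^0\xi)(g,u)=\bigl(d\bar\xi\bigr)(e,g,u)=\bar\xi(g,u)-\bar\xi(e,u)=g\xi(g^{-1}u)-\xi(u),\]
which is precisely a general element of $B^1(U,V)$ as in Definition \ref{cocycle and coboundary}; hence ${\rm Image}(d^0)=B^1(U,V)$. For $\psi\in C^1(U,V)$ one has $\bar\psi(g_0,g_1,u)=g_0\psi(g_0^{-1}g_1,g_0^{-1}u)$, and therefore
\[(d^1\psi)(g_1,g_2,u)=\bigl(d\bar\psi\bigr)(e,g_1,g_2,u)=\bar\psi(g_1,g_2,u)-\bar\psi(e,g_2,u)+\bar\psi(e,g_1,u)=g_1\psi(g_1^{-1}g_2,g_1^{-1}u)-\psi(g_2,u)+\psi(g_1,u).\]
Setting $h_2:=g_1^{-1}g_2$ (equivalently $g_2=g_1h_2$), the equation $d^1\psi=0$ reads $\psi(g_1h_2,u)=g_1\psi(h_2,g_1^{-1}u)+\psi(g_1,u)$ for all $g_1,h_2\in G$ and $u\in U$, which is exactly the $1$-cocycle condition of Definition \ref{cocycle and coboundary}. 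Thus $\ker(d^1)=Z^1(U,V)$.

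Combining the two computations gives $H^1\bigl(C^\bullet(U,V)\bigr)=\ker(d^1)/{\rm Image}(d^0)=Z^1(U,V)/B^1(U,V)$, and \eqref{cohomology formula} together with the bijections $C^i_G(U,V)\cong C^i(U,V)$ identifies this with ${\rm Ext}^1_G(U,V)$, proving the lemma. One should also spell out that the bijections $\phi\leftrightarrow\tilde\phi$ are compatible with the inclusion of the smooth subcomplex $C^\bullet(G,V)^\infty$ and with ${\rm Hom}_G(U,-)$, but this is immediate from the way they were introduced above.

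The argument contains no real obstacle; the only thing requiring care is the bookkeeping, namely keeping track of the alternating signs in $d$ when restricting the first coordinate to $e$, and performing the change of variables $g_2\leftrightarrow g_1g_2$ in the last step so that the kernel of $d^1$ matches the stated cocycle formula. The genuine content of the lemma is simply the observation that the two classical group-cocycle and group-coboundary formulas are exactly what appear in the $C^0\to C^1\to C^2$ segment of this bar-type injective resolution.
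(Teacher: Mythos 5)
Your proposal is correct and follows essentially the same route as the paper: unwind \eqref{cohomology formula} in degree one via the identifications $\Hom_G(U,C^i(G,V)^\infty)\cong C^i_G(U,V)\cong C^i(U,V)$ and match $\ker(d^1)$ and $\mathrm{Image}(d^0)$ with $Z^1(U,V)$ and $B^1(U,V)$. You merely write out explicitly the two transport computations that the paper's proof asserts without detail, and your formulas check out.
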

\begin{proof}

The first cohomology ${\rm Ext}^1_G(U,V)$ of the complex $C^\bullet_G(U,V)$ is computed by the following quotient
\[ \{\phi \in C^1_G(U,V) \,|\,  d\phi = 0\} / \{  \phi\in C^1_G(U,V)  \,|\,  \phi=d\psi, \text{ for some } \psi \in C^0_G(U,V)\}.\] 

For any $\phi\in C^1_G(U,V)$, the condition $d\phi = 0$ exactly corresponds to 1-cocycle condition for $\tilde{\phi}\in C^1(U,V)$ in the sense of Definition \ref{cocycle and coboundary}.  Similarly $\phi=d\psi$ for some $\psi\in C^0_G(U,V)$ corresponds to the 1-coboundary condition for $\tilde{\phi}$.  
Hence  ${\rm Ext}^1_G(U,V)$ is isomorphic to $Z^1(U,V)/ B^1(U,V)$.

\end{proof}

Let $V,W$ be two smooth representations of $G$. Let $U$ be a sub-representation of $V$.  There exists a long exact sequence 
\[ 0\to {\rm Hom}_G(V/U, W)\to {\rm Hom}_G(V, W)\to {\rm Hom}_G(U,W)\xrightarrow{\delta} {\rm Ext}^1_G(V/U, W)\to \cdots  .\]
\begin{lemma}
\label{cocycle_connect}
With the same notation as above, for any $G$-homomorphism $\xi: U\to W$,  assume that there exists a linear map $\tilde{\xi}: V\to W$ such that $\tilde{\xi}|_U=\xi$.  Then $\delta(\xi)$ is represented by the 1-cocycle from $V/U$ to $W$ given by 
\[ g\mapsto  g\cdot \tilde{\xi} -\tilde{\xi} \in {\rm Hom}(V/U,W) . \]
\end{lemma}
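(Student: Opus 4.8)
The plan is to identify $\delta$ with the usual snake-lemma connecting map and then to transport the computation, step by step, into the inhomogeneous cochain complexes $C^\bullet_G(-,W)\cong C^\bullet(-,W)$ exactly as in the proof of Lemma~\ref{lem_cocycle}.

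First I would record the setup. Applying $\Hom_G(-,C^\bullet(G,W)^\infty)$ to the short exact sequence $0\to U\xrightarrow{\iota}V\xrightarrow{\pi}V/U\to 0$ and using that each $C^i(G,W)^\infty$ is an injective smooth $G$-module (shown above), one obtains a short exact sequence of complexes
\[
0\to \Hom_G(V/U,C^\bullet(G,W)^\infty)\xrightarrow{\pi^*}\Hom_G(V,C^\bullet(G,W)^\infty)\xrightarrow{\iota^*}\Hom_G(U,C^\bullet(G,W)^\infty)\to 0 ,
\]
whose long exact cohomology sequence is, via the identification $\Ext^i_G(-,W)\cong H^i(\Hom_G(-,C^\bullet(G,W)^\infty))$, precisely the displayed $\Ext$-sequence, with connecting homomorphism $\delta$. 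Under the identifications $\Hom_G(-,C^i(G,W)^\infty)\cong C^i_G(-,W)\cong C^i(-,W)$ recalled before Lemma~\ref{lem_cocycle}, the map $\iota^*$ becomes restriction of cochains from $V$ to $U$ (which is surjective: extend any cochain by zero using a $\C$-linear splitting of $U\hookrightarrow V$), $\pi^*$ becomes precomposition with $\pi$, and the complex differentials become the differentials on $C^\bullet(-,W)$; in particular, as was used in Lemma~\ref{lem_cocycle}, $d^0\colon \Hom(V/U,W)\to C^1(V/U,W)$ sends $\zeta$ to $g\mapsto g\cdot\zeta-\zeta$, and $\Ext^1_G(V/U,W)=Z^1(V/U,W)/B^1(V/U,W)$.

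Next I would run the snake lemma on the class of $\xi$. Viewed inside $C^0(U,W)=\Hom(U,W)$, the element $\xi$ is a 0-cocycle, since $d^0\xi(g,u)=g\xi(g^{-1}u)-\xi(u)=0$ precisely expresses the $G$-equivariance of $\xi$. The hypothesis supplies a preimage of $\xi$ under the surjection $C^0(V,W)=\Hom(V,W)\twoheadrightarrow\Hom(U,W)$, namely the linear map $\tilde\xi$. Applying $d^0$ on $C^\bullet(V,W)$ and tracking through the identifications, one gets that $d^0\tilde\xi$ is the inhomogeneous 1-cochain $(g,v)\mapsto g\tilde\xi(g^{-1}v)-\tilde\xi(v)$, that is $g\mapsto g\cdot\tilde\xi-\tilde\xi\in\Hom(V,W)$. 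Its restriction to $U$ is $d^0\xi=0$, so $d^0\tilde\xi$ lies in the image of $\pi^*$; concretely, for $u\in U$ one has $(g\cdot\tilde\xi-\tilde\xi)(u)=g\xi(g^{-1}u)-\xi(u)=0$, again by $G$-equivariance of $\xi$, hence $g\cdot\tilde\xi-\tilde\xi$ factors through $V/U$ and yields a cochain in $C^1(V/U,W)$, which is a 1-cocycle because $d^1d^0=0$. By the definition of the connecting map this 1-cocycle represents $\delta(\xi)$, which is exactly the asserted formula.

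The only real work, and the place needing care, is the bookkeeping in the second paragraph: one must verify that the identifications $\Hom_G(-,C^i(G,W)^\infty)\cong C^i(-,W)$ intertwine the complex differentials and the maps $\iota^*,\pi^*$ with, respectively, the inhomogeneous differentials and cochain restriction/precomposition --- this is the same check already used implicitly in Lemma~\ref{lem_cocycle}, now applied with $V$ and $V/U$ in the role of $U$ --- together with keeping the signs straight through $d^0$ so that one lands on $g\cdot\tilde\xi-\tilde\xi$ and not its negative. Beyond this routine verification I do not expect any genuine obstacle.
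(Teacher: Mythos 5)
Your proposal is correct and follows essentially the same route as the paper: the paper also defines $\phi(g,v)=g\tilde\xi(g^{-1}v)-\tilde\xi(v)$, checks it is a smooth cochain vanishing on $U$, hence a $1$-cocycle on $V/U$, and appeals to "the standard construction of the connecting homomorphism." You merely spell out that snake-lemma step (and the identifications $\Hom_G(-,C^i(G,W)^\infty)\cong C^i(-,W)$) in more detail than the paper does.
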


\begin{proof}
Set $\phi(g,v)=g\cdot \tilde{\xi}(g^{-1}\cdot v)-\tilde{\xi}(v)$.  For every vector $v\in V$ and every $g\in G$,  there exists an open subgroup $K$ of $G$ such that $K$ stabilizes $v$ and it also stabilizes the vector $g\cdot \tilde{\xi}(g^{-1}\cdot v )\in W$. It follows that $\phi(\cdot, v)$ is locally constant, i.e.  $\phi\in C^1(V,U)$.  Note that for any $v\in U$,  $\phi(g,v)=0$, hence $\phi$ descends to an element in $C^1(V/U,W)$, and it is easy to check that it satisfies the 1-cocycle condition. Following the standard construction of connecting homomorphism, this 1-cocycle represents the element $\delta(\xi)\in {\rm Ext}^1_G(V/U,W)$. 
\end{proof}
	
\subsection{Residues and 1-cocycles}

In this subsection we relate residues of meromorphic intertwining operators to 
connecting maps in the long exact sequence for extension spaces. 
To state the result, we need to introduce the notion of a meromorphic intertwining operator 
between representations of an $\ell$-group $G$.
\begin{definition}
Let $D$  be an open subset in $\mathbb{C}$.  A $D$-holomorphic character of $G$ is a map $\varpi: D\times G \to \mathbb{C} ^\times$ such that 
\begin{enumerate}
\item for any $g\in G$,  $\varpi(\cdot, g):D\to \mathbb{C}^\times$ is a holomorphic function;
\item there exists an open subgroup $K\subset G$ such that for any $s\in D$, $\varpi(s,\cdot): G\to \mathbb{C}^\times$ is a group homomorphism and $K$ is contained in the kernel.  
\end{enumerate}
\end{definition}

We call a smooth representation $W$ of $G$ \textbf{admissible} if for every open compact subgroup $K \subseteq G$, the space of invariants $W^K$ is finite dimensional. 
\begin{definition}
\label{meromorphic_intertwining}
Let  $V$ be a smooth representation of $G$ and let  $W$ be an admissible smooth representation of $G$. Let $\varpi $ be a $D$-holomorphic  character of $G$.
A $\varpi$-equivariant meromorphic intertwinig operator from $V$ to $W$ over $D$
is a map $\xi : D\times V \to W$  with the following properties:  
\begin{itemize}

\item For every $g \in G$ and $v \in V$, we have $g\cdot  \xi(s,  v) = \varpi(s,g) \xi(s,g\cdot v)$. 

\item  For any $s\in D$,  $\xi(s, \cdot)$ is a linear operator from $V$ to $W$.  For any $v\in V$ there exists an open subgroup $K$ of $G$ that stabilizes $v$, such that  $\chi$ is trivial on $K$ and the induced map $\xi(\cdot,v): D\to W^K$ is a meromorphic function on $D$ with values in the finite-dimensional vector space $W^K$.  

\item There exists a discrete subset $\Pi\subset D$ such that  $\xi(\cdot, v)$ is holomorphic outside of $\Pi$ for any $v\in V$ and the orders of poles at the points of $\Pi$ are uniformly bounded with respect to $v\in V$ at any point of $\Pi$.  
\end{itemize} 
\end{definition}
 
Given a $\varpi$-equivariant meromorphic intertwining operator $\xi: D\times V\to W$, we denote by $\xi(s)$ the associated linear operator $\xi(s, \cdot)$. Clearly $\xi(s)$ is a $G$-homomorphism from $V$ to the representation 
$W\otimes \varpi(s)$, where $\varpi(s)$ denotes the character  $\varpi(s,\cdot)$ of $G$. We consider the Laurent expansion of $\xi(s)$ at $s=s_0$, 
\[ \xi(s)=\sum_{i=-k_0}^\infty \xi_i\cdot  (s-s_0)^i , \]
where $k_0$ is the order of the pole of $\xi(s)$ at $s=s_0$, and  the coefficient $\xi_i$ is a linear operator from $V$ to $W$ for each $i$. The coefficient $\xi_{-1}$ is called the $\bold{residue}$ of $\xi(s)$ at $s=s_0$, denoted by ${\rm Res}_{s=s_0}\xi(s)$. 

We now state the main result of this appendix. 
	
\begin{theorem}
\label{Main_Theorem}
 Let  $\xi$ be a $\varpi$-equivariant meromorphic intertwining operator from $V$ to $W$ over $D$, where $V$ is a smooth representation and $W$ is a smooth admissible representation of $G$. 
Let $U \subseteq V$ be a sub-representation such that $\xi|_U$ is holomorphic at $s_0 \in D$.   Let $\delta: {\rm Hom}_G(U,W )\to {\rm Ext}^1_G(V/U,W)$ be the connecting homomorphism.  Assume that $\varpi(s_0)$ is the trivial character of $G$. Then
\begin{enumerate}
\item  For any $s_0\in D$,  the map ${\rm Res}_{s=s_0}  (\frac{  \varpi(s)-1 }{ s-s_0  } \xi(s)  )  $ given by
\[ g\mapsto   {\rm Res}_{s=s_0}( \frac{\varpi(g,s)-1}{s-s_0}\xi(s))   \]
is a 1-cocycle from $V$ to $W$.  
\item    The class of $\delta(\xi|_U(s_0))$ in ${\rm Ext}^1_G(V/U,W)$ is represented by the 1-cocycle ${\rm Res}_{s=s_0}  (\frac{  \varpi(s)-1  }{ s-s_0  } \xi(s)  )  $.
\end{enumerate}
\end{theorem}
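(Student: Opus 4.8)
The plan is to deduce everything from the purely algebraic Lemma~\ref{cocycle_connect}, by combining the Laurent expansion of $\xi(s)$ at $s_0$ with the $\varpi$-equivariance rewritten in operator form. Fix notation: for each $v\in V$ the germ $s\mapsto\xi(s,v)$ is meromorphic with values in a finite-dimensional space $W^K$, so every manipulation below is a finite-dimensional computation with Laurent series; in particular, for a \emph{fixed} element $g\in G$, both the residue and every individual Laurent coefficient commute with the linear map $f\mapsto g\cdot f$ on $\Hom(V,W)$ and with multiplication by the scalar-valued holomorphic function $\varpi(s,g)$. Write $\xi(s)=\sum_{i\ge -k_0}\xi_i\,(s-s_0)^i$ with $\xi_i\in\Hom(V,W)$. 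The equivariance $g\cdot\xi(s,v)=\varpi(s,g)\,\xi(s,g\cdot v)$ is equivalent, after the substitution $v\mapsto g^{-1}v$, to the identity $g\cdot\xi(s)=\varpi(s,g)\,\xi(s)$ of $\Hom(V,W)$-valued functions, where $G$ acts on $\Hom(V,W)$ by $(g\cdot f)(v)=g\,f(g^{-1}v)$. Since $\varpi(s_0)$ is trivial, $\varpi(s,g)-1$ vanishes at $s=s_0$; write $\varpi(s,g)=\sum_{j\ge 0}a_j(g)\,(s-s_0)^j$ with $a_0(g)=1$, so that $\frac{\varpi(s,g)-1}{s-s_0}=\sum_{l\ge 0}a_{l+1}(g)\,(s-s_0)^l$ is holomorphic at $s_0$.

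For part (1), set $\Phi(g):={\rm Res}_{s=s_0}\!\big(\frac{\varpi(s)-1}{s-s_0}\,\xi(s)\big)$, so that explicitly $\Phi(g)=\sum_{j\ge 1}a_j(g)\,\xi_{-j}$, a finite sum in $\Hom(V,W)$. First I would check $\Phi\in C^1(V,W)$: taking the open subgroup $K$ contained in the kernel of every $\varpi(s,\cdot)$ gives $\varpi(s,gk)=\varpi(s,g)$ for $k\in K$, hence $\Phi(gk,v)=\Phi(g,v)$, so $\Phi(\cdot,v)$ is locally constant. For the $1$-cocycle identity I would start from the elementary decomposition
\[
\varpi(s,g_1g_2)-1=\varpi(s,g_1)\big(\varpi(s,g_2)-1\big)+\big(\varpi(s,g_1)-1\big),
\]
valid for $s\in D$ because $\varpi(s,\cdot)$ is a homomorphism, divide by $s-s_0$, multiply by $\xi(s)$, and split the residue into two summands. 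In the first summand I would replace $\varpi(s,g_1)\,\xi(s)$ by $g_1\cdot\xi(s)$ and pull the fixed operator $g_1\cdot$ outside the residue; the outcome is $\Phi(g_1g_2)=g_1\cdot\Phi(g_2)+\Phi(g_1)$, which is exactly the $1$-cocycle condition of Definition~\ref{cocycle and coboundary}.

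For part (2), the candidate extension of $\xi|_U(s_0)$ to $V$ is the constant Laurent coefficient $\widetilde\xi:=\xi_0\in\Hom(V,W)$. Because $\xi|_U$ is holomorphic at $s_0$, its constant term is its value, so $\widetilde\xi|_U=\xi_0|_U=\xi|_U(s_0)$; thus $\widetilde\xi$ is a linear lift of the $G$-homomorphism $\xi|_U(s_0)\colon U\to W$, as required to apply Lemma~\ref{cocycle_connect}. Comparing the coefficients of $(s-s_0)^0$ in $g\cdot\xi(s)=\varpi(s,g)\,\xi(s)$ yields $g\cdot\xi_0=\xi_0+\sum_{j\ge 1}a_j(g)\,\xi_{-j}$, that is, $g\cdot\widetilde\xi-\widetilde\xi=\Phi(g)$ in $\Hom(V,W)$. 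Moreover $\Phi(g)$ vanishes on $U$: on $U$ the product $\frac{\varpi(s,g)-1}{s-s_0}\,\xi(s)$ is holomorphic at $s_0$ (a holomorphic scalar times the holomorphic operator $\xi|_U(s)$), so its residue is $0$; hence $\Phi$ descends to a $1$-cocycle from $V/U$ to $W$. Lemma~\ref{cocycle_connect}, applied with this choice of $\widetilde\xi$, now says that the class of $\delta(\xi|_U(s_0))$ in $\Ext^1_G(V/U,W)$ is represented by $g\mapsto g\cdot\widetilde\xi-\widetilde\xi$, which we have just identified with $\Phi$; this is the assertion of part (2).

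I do not anticipate a serious obstacle. The delicate points are entirely bookkeeping: keeping straight the direction of the $\varpi$-equivariance and the convention for the $G$-action on $\Hom(V,W)$, and justifying that for each fixed $g$ the residue and the Laurent coefficients commute with $f\mapsto g\cdot f$ and with multiplication by $\varpi(s,g)$ --- which holds because, vector by vector, the computation lives in a finite-dimensional space of meromorphic germs. The one genuinely new input, beyond Lemma~\ref{cocycle_connect}, is the remark that the hypothesis ``$\varpi(s_0)$ trivial'' makes $\frac{\varpi(s,g)-1}{s-s_0}$ holomorphic at $s_0$, which is what lets the residue interact transparently with the Laurent expansion of $\xi$ and, in particular, makes $\Phi$ descend to $V/U$.
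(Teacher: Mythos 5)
Your proposal is correct and follows essentially the same route as the paper: take the constant Laurent coefficient $\xi_0$ as the (non-equivariant) linear extension of $\xi|_U(s_0)$, invoke Lemma \ref{cocycle_connect}, and identify $g\cdot\xi_0-\xi_0$ with ${\rm Res}_{s=s_0}\bigl(\frac{\varpi(g,s)-1}{s-s_0}\xi(s)\bigr)$ by comparing Laurent data, exactly as the paper does via $\xi_0={\rm Res}_{s=s_0}\frac{\xi(s)}{s-s_0}$. Your only addition is a direct verification of the cocycle identity in part (1) using $\varpi(s,g_1g_2)-1=\varpi(s,g_1)(\varpi(s,g_2)-1)+(\varpi(s,g_1)-1)$, which the paper leaves implicit since the residue map coincides with $g\mapsto g\cdot\xi_0-\xi_0$.
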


\begin{proof}
   Recall that $\xi_0$ is the 0-th coefficient in the Laurent expansion of $\xi(s)$ at $s=s_0$. Therefore, $\xi_0:  V\to W$ gives an extension of $\xi|_U(s_0): U\to W$ as a linear operator (not necessarily equivariant). Moreover, it is a smooth vector of ${\rm Hom}(V,W)$ since it is fixed by the kernel of $\varpi$, which is open in $G$.  In view of Lemma \ref{cocycle_connect},  $\delta( \xi|_U(s_0))$ can be represented by the 1-cocycle $g\cdot \xi_0-\xi_0$. Finally, note that $\xi_0={\rm Res}_{s=s_0}\frac{\xi(s) }{s-s_0}$ and hence 
\[  g\cdot \xi_0-\xi_0={\rm Res}_{s=s_0}\frac{g\cdot \xi(s)}{s-s_0}- {\rm Res}_{s=s_0}\frac{\xi(s) }{s-s_0}={\rm Res}_{s=s_0}( \frac{\varpi(g,s)-1}{s-s_0}\xi(s)).\]
This finishes the proof. 

\end{proof}


\subsection{Generalized homomorphisms and a non-vanishing criterion of  residue 1-cocycles}


Let $V,W$ be two smooth representations of $G$. We first recall  the definition of generalized $G$-homomorphisms defined in [HS]. The group $G$ acts on ${\rm Hom(V,W)}$ naturally. The space ${\rm Hom}_{G,k}(V,W)$ of generalized $G$-homomorphisms from $V$ to $W$ of order $\leq k$ consists of $\xi\in {\rm Hom}(V,W)$ such that
\[  (g_0-1)(g_1-1)\cdots (g_k-1)\cdot \xi=0 , \quad \text{ for any } g_0,\cdots, g_k\in G.  \]   

The space ${\rm Hom}_{G,\infty}(V,W) $ of all generalized $G$-homomorphisms is the union 
\[ {\rm Hom}_{G,\infty}(V,W):= \bigcup_{k=0}^\infty {\rm Hom}_{G, k}(V,W) .\]

Assume that $W$ is an admissible representation of $G$.  Let $\xi(s)$ be a $\varpi$-equivariant meromorphic intertwining operator from $V$ to $W$, where $\varpi$ is a holomorphic character of $G$.  Recall the Laurent expansion of $\xi(s)$ at $s=s_0$, 
\[ \xi(s)=\sum_{i=-k_0}^\infty  \xi_i  (s-s_0)^i ,\]
where $k_0$ is the order of the pole of $\xi(s)$ at $s=s_0$. Assume further that $\varpi(g,s_0)=1$ for any $g\in G$.  Consider the Taylor expansion of $\varpi(g,s)$ at $s=s_0$. 
\[ \varpi(g,s)=1+ \sum_{i=1}^\infty \varpi_i(g,s_0) (s-s_0)^ i . \]

\begin{lemma}
\label{formula_coeff}
With the same notation as above,  we have 
\[ (g-1)\xi_i=\sum_{j=1}^{i+k_0} \varpi_j(g,s_0)\xi_{i-j}\]
and in particular
$\xi_i \in {\rm Hom}_{G, i+k_0}(V,W) $ for each $i$. 
\end{lemma}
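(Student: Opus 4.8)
The plan is to turn the equivariance of $\xi$ into an identity of meromorphic families of operators and then read off coefficients. Recall from Definition \ref{meromorphic_intertwining} that $g\cdot\xi(s,v)=\varpi(s,g)\,\xi(s,g\cdot v)$ for all $g\in G$, $v\in V$; replacing $v$ by $g^{-1}v$ and using the convention $(g\cdot f)(v)=gf(g^{-1}v)$ on ${\rm Hom}(V,W)$, this says precisely
\[
 g\cdot\xi(s)=\varpi(g,s)\,\xi(s)
\]
as an equality of ${\rm Hom}(V,W)$-valued functions meromorphic near $s_0$ (interpreted for each fixed $v$ as an identity of meromorphic functions valued in a finite-dimensional $W^K$, which is exactly the content of Definition \ref{meromorphic_intertwining}).

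Next I would substitute the Laurent expansion $\xi(s)=\sum_{i\ge -k_0}\xi_i(s-s_0)^i$ and the Taylor expansion $\varpi(g,s)=1+\sum_{j\ge 1}\varpi_j(g,s_0)(s-s_0)^j$ into this operator identity and compare the coefficient of $(s-s_0)^i$ on both sides. Because $\xi_{i-j}=0$ whenever $i-j<-k_0$, i.e.\ whenever $j>i+k_0$, the Cauchy product on the right truncates to a finite sum, giving
\[
 g\cdot\xi_i=\xi_i+\sum_{j=1}^{i+k_0}\varpi_j(g,s_0)\,\xi_{i-j},
\]
which is the asserted recursion $(g-1)\xi_i=\sum_{j=1}^{i+k_0}\varpi_j(g,s_0)\xi_{i-j}$.

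For the membership statement I would induct on $i\ge -k_0$, first noting that the filtration is increasing, ${\rm Hom}_{G,m}(V,W)\subseteq{\rm Hom}_{G,m+1}(V,W)$, since applying one further factor $(g-1)$ to a vanishing product still vanishes. The base case $i=-k_0$ is the empty-sum case: $(g-1)\xi_{-k_0}=0$ for all $g$, so $\xi_{-k_0}\in{\rm Hom}_{G,0}(V,W)$. For the inductive step, assume $\xi_m\in{\rm Hom}_{G,m+k_0}(V,W)$ for all $m<i$, take arbitrary $g_0,\dots,g_{i+k_0}\in G$, and apply $(g_0-1)\cdots(g_{i+k_0-1}-1)$ to the recursion evaluated at $g=g_{i+k_0}$. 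Pulling out the scalars $\varpi_j(g_{i+k_0},s_0)$, every summand becomes $(g_0-1)\cdots(g_{i+k_0-1}-1)\xi_{i-j}$ with $j\ge 1$; since $\xi_{i-j}\in{\rm Hom}_{G,(i-j)+k_0}(V,W)$ and the $i+k_0$ available factors exceed the number $(i-j)+k_0+1$ needed to kill $\xi_{i-j}$ exactly when $j\ge 1$, each summand vanishes. Hence $(g_0-1)\cdots(g_{i+k_0}-1)\xi_i=0$, so $\xi_i\in{\rm Hom}_{G,i+k_0}(V,W)$.

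The only point demanding care is the bookkeeping: that the Cauchy-product index stops exactly at $j=i+k_0$ because of the pole order $k_0$, and — symmetrically — that in the inductive step the number $i+k_0$ of $(g-1)$-factors one may apply to each $\xi_{i-j}$ in the recursion is at least one more than its order $(i-j)+k_0$, which is precisely what the condition $j\ge 1$ guarantees. Beyond the existence of the Laurent and Taylor expansions, already built into Definition \ref{meromorphic_intertwining}, the argument is purely formal manipulation of the operator identity, so no further analytic input is required.
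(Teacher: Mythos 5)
Your proof is correct and is exactly the argument the paper intends: the paper's own proof consists of the single remark that the lemma ``follows from the equivariant property $g\cdot\xi(s)=\varpi(g,s)\xi(s)$,'' and you have simply written out the coefficient comparison and the induction that this remark leaves implicit. No issues.
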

\begin{proof}
The lemma  follows from the following equivariant property:
\[ g\cdot \xi(s)=\varpi(g,s)\xi(s).  \]
\end{proof}

By Theorem \ref{Main_Theorem} and Lemma \ref{formula_coeff}, we immediately have the following corollary: 

\begin{corollary}
The class $\delta(\xi|_U(s_0))$ is represented by 
\[ g\mapsto  \sum_{i=1}^{k_0} \varpi_i(g,s_0) \xi_{-i} .\]
\end{corollary}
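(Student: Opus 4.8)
The plan is to read the corollary off from the two results that precede it, so the argument is pure coefficient bookkeeping with no genuine choice of method. First I would recall that the proof of Theorem~\ref{Main_Theorem} has already produced an explicit representative for $\delta(\xi|_U(s_0))$: applying Lemma~\ref{cocycle_connect} to the linear (non-equivariant) extension $\xi_0 : V\to W$ of $\xi|_U(s_0)$ — which exists precisely because $\xi|_U$ is holomorphic at $s_0$, so $\xi_0|_U=\xi|_U(s_0)$ — the class $\delta(\xi|_U(s_0))\in{\rm Ext}^1_G(V/U,W)$ is represented by the $1$-cocycle $g\mapsto g\cdot\xi_0-\xi_0$, where the dot is the canonical $G$-action on ${\rm Hom}(V,W)$ and this expression vanishes on $U$, hence descends to $V/U$. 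Since that action makes $g\cdot\xi_0-\xi_0$ equal to the module-theoretic element $(g-1)\cdot\xi_0$, it remains only to express $(g-1)\cdot\xi_0$ through the Taylor coefficients of $\varpi$ and the Laurent coefficients of $\xi$.

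That rewriting is exactly Lemma~\ref{formula_coeff} specialized to $i=0$: since $0+k_0=k_0$ and $\xi_{0-j}=\xi_{-j}$, the lemma gives
\[ (g-1)\cdot\xi_0=\sum_{j=1}^{k_0}\varpi_j(g,s_0)\,\xi_{-j}\qquad(g\in G), \]
and after renaming the summation index this is precisely the asserted representative $g\mapsto\sum_{i=1}^{k_0}\varpi_i(g,s_0)\,\xi_{-i}$. This completes the plan.

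As a cross-check — and an alternative route for a reader who prefers to start from the residue description in Theorem~\ref{Main_Theorem}(2) rather than from $(g-1)\cdot\xi_0$ — I would expand $\frac{\varpi(g,s)-1}{s-s_0}=\sum_{j\ge0}\varpi_{j+1}(g,s_0)(s-s_0)^j$ and $\xi(s)=\sum_{i\ge-k_0}\xi_i(s-s_0)^i$, multiply the two series, and extract the coefficient of $(s-s_0)^{-1}$; the constraint $i+j=-1$ with $j\ge0$ together with $\xi_i=0$ for $i<-k_0$ confines $i$ to $-k_0\le i\le-1$, and substituting $m=-i$ reproduces $\sum_{m=1}^{k_0}\varpi_m(g,s_0)\,\xi_{-m}$, in agreement with the first computation.

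I do not expect a real obstacle. The only points requiring attention are the finiteness of the displayed sum (guaranteed by $\xi_i=0$ for $i<-k_0$, i.e.\ by $k_0$ being the pole order) and the bookkeeping of indices and conventions; in particular one must use that the $G$-action on ${\rm Hom}(V,W)$ is $(g\cdot f)(v)=gf(g^{-1}v)$, so that the abstract element $(g-1)\cdot\xi_0$ is literally the operator $v\mapsto g\,\xi_0(g^{-1}v)-\xi_0(v)$ that appears in the $1$-cocycle furnished by Theorem~\ref{Main_Theorem}.
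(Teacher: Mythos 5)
Your proposal is correct and matches the paper's own justification, which simply cites Theorem \ref{Main_Theorem} and Lemma \ref{formula_coeff} without writing out the computation: the representative $g\mapsto (g-1)\cdot\xi_0$ combined with the $i=0$ case of Lemma \ref{formula_coeff} (equivalently, the residue expansion you give as a cross-check) yields the stated formula. No issues.
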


We deduce from this a criterion for the non-vanishing of $\delta(\xi|_U(s_0))$.  
\begin{theorem}
\label{Criterion_Non_Van}
Assume that $k_0\geq 1$, and  there exists $g_1,g_2,\cdots, g_{k_0}\in G$  such that
\begin{enumerate}
\item  $\frac{d \varpi(g_i,s)}{ds}(s_0)\neq 0$ for each $i$.
\item   $(g_1-1)(g_2-1)\cdots (g_{k_0}-1)$ acts on ${\rm Hom}_{G,\infty}(V/U,W) $ by zero. 
\end{enumerate}
Then $\delta(\xi|_U(s_0))\neq 0$.  
\end{theorem}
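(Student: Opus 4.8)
The plan is to argue by contradiction: I would assume $\delta(\xi|_U(s_0))=0$ and deduce that the leading Laurent coefficient $\xi_{-k_0}$ of $\xi(s)$ at $s_0$ vanishes, contradicting the hypothesis $k_0\geq 1$ (which says that $\xi(s)$ has a pole at $s_0$ of order \emph{exactly} $k_0$). Concretely, if $\delta(\xi|_U(s_0))=0$, then by exactness of the long exact sequence
\[ 0\to \Hom_G(V/U,W)\to \Hom_G(V,W)\to \Hom_G(U,W)\xrightarrow{\delta}\Ext^1_G(V/U,W)\to\cdots \]
the homomorphism $\xi|_U(s_0)$ lifts to a $G$-homomorphism $\tilde\xi:V\to W$ with $\tilde\xi|_U=\xi|_U(s_0)$.

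Next I would use the Laurent coefficients. Since $\xi|_U$ is holomorphic at $s_0$ we have $\xi_0|_U=\xi|_U(s_0)=\tilde\xi|_U$, so $\eta:=\xi_0-\tilde\xi$ kills $U$ and descends to an element of $\Hom(V/U,W)$. By Lemma \ref{formula_coeff} (with $i=0$), $\xi_0\in\Hom_{G,k_0}(V,W)$, whereas $\tilde\xi\in\Hom_G(V,W)=\Hom_{G,0}(V,W)$; hence $\eta\in\Hom_{G,k_0}(V/U,W)\subseteq\Hom_{G,\infty}(V/U,W)$. Condition (2) then gives $(g_1-1)(g_2-1)\cdots(g_{k_0}-1)\eta=0$, and since $\tilde\xi$ is $G$-equivariant we have $(g-1)\tilde\xi=0$ for all $g$, so $(g_1-1)(g_2-1)\cdots(g_{k_0}-1)\xi_0=0$.

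Finally I would compute this last expression directly. The equivariance $g\cdot\xi(s)=\varpi(g,s)\xi(s)$ gives $(g-1)\xi(s)=(\varpi(g,s)-1)\xi(s)$, and iterating, $(g_1-1)\cdots(g_{k_0}-1)\xi(s)=\big(\prod_{i=1}^{k_0}(\varpi(g_i,s)-1)\big)\xi(s)$. By condition (1) each $\varpi(g_i,s)-1$ vanishes to order exactly $1$ at $s_0$, with derivative $\varpi_1(g_i,s_0)\neq 0$; hence $\prod_i(\varpi(g_i,s)-1)$ vanishes to order exactly $k_0$ at $s_0$ with leading coefficient $\prod_i\varpi_1(g_i,s_0)\neq 0$. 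Multiplying by $\xi(s)$, which has leading term $\xi_{-k_0}(s-s_0)^{-k_0}$, and comparing the constant terms of the two Laurent expansions of $(g_1-1)\cdots(g_{k_0}-1)\xi(s)$, one gets $(g_1-1)\cdots(g_{k_0}-1)\xi_0=\big(\prod_{i=1}^{k_0}\varpi_1(g_i,s_0)\big)\xi_{-k_0}$. Together with the vanishing above this forces $\xi_{-k_0}=0$, the desired contradiction, so $\delta(\xi|_U(s_0))\neq 0$.

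I expect the two delicate points to be: (i) checking that $\eta=\xi_0-\tilde\xi$ actually lies in $\Hom_{G,\infty}(V/U,W)$, so that condition (2) applies — this is where Lemma \ref{formula_coeff} is essential, together with the trivial observation that a genuine $G$-homomorphism is annihilated by every $g-1$; and (ii) the bookkeeping in the last Laurent-coefficient identity, which is clean only because $\xi$ has a pole of order exactly $k_0$, so $\xi_{-m}=0$ for $m>k_0$ and no lower-order terms contaminate the constant term. As an alternative route, the same argument can be run through the residue $1$-cocycle $g\mapsto\sum_{i=1}^{k_0}\varpi_i(g,s_0)\xi_{-i}$ representing $\delta(\xi|_U(s_0))$ from the corollary just above: assuming it is a $1$-coboundary and applying $(g_1-1)\cdots(g_{k_0}-1)$ leads to the same conclusion.
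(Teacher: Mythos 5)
Your proposal is correct and is essentially the paper's own argument: both reduce to showing $(g_1-1)\cdots(g_{k_0}-1)\xi_0=\bigl(\prod_i\varpi_1(g_i,s_0)\bigr)\xi_{-k_0}\neq 0$ via Lemma \ref{formula_coeff}, while the extendability assumption plus condition (2) would force this expression to vanish. Your $\eta=\xi_0-\tilde\xi$ is exactly the paper's coboundary witness $\bar\xi$ (and your direct verification that $\eta\in\Hom_{G,k_0}(V/U,W)$ is, if anything, slightly cleaner than the paper's), so this counts as the same proof.
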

\begin{proof}
Note that $\varpi_1(g,s_0)=\frac{ d\varpi(g,s)}{ds}(s_0)$.  By repeating Lemma \ref{formula_coeff}, we have 
\[ (g_1-1)(g_2-1)\cdots (g_{k_0}-1)\cdot \xi_0=(\prod_{i=1}^k \frac{d\varpi(g_i,s)}{ds}(s_0))  \xi_{-k_0} . \]

Assume that $(g\mapsto g\cdot \xi_0-\xi_0)$ is  a 1-coboundary from $V/U$ to $W$, then there exists  $\bar{\xi} \in {\rm Hom}(V/U,W)$ such that $g\cdot \xi_0-\xi_0=g\cdot \bar{\xi}-\bar{\xi}$ for any $g\in G$. By Lemma \ref{formula_coeff} $g\cdot \xi_0-\xi_0$ is a generalized $G$-homomorphism,  it follows that $\bar{\xi}\in {\rm Hom}_{G,\infty}(V/U,W)$.  By assumption (2),  
\[ (g_1-1)(g_2-1)\cdots (g_{k_0}-1)\cdot \bar{\xi}=0.  \]
But the left-hand side equals to 
\[  (g_1-1)\cdots  (g_{k_0-1} -1)(g_{k_0}-1)\cdot \xi_0 \] by the equality 
$g\tilde{\xi} - \tilde{\xi} = g \xi_0 - \xi_0$. 
The distribution $(g_1-1)\cdots  (g_{k_0-1} -1)(g_{k_0}-1)\cdot \xi_0$ is nonzero since $\frac{d \varpi(g_i,s)}{ds}(s_0)\neq 0$ for each $i$.  We arrive at a contradiction, and hence $\delta(\xi|_U(s_0))\neq 0$.

\end{proof}

We shall now come back to the setup of Section \ref{main_thm}. Let $X$ be the space $\sfX(F)$ and and $G=\sfG(F)$.
 Assume that $Z_{f,\mu}$ is standard at $s=s_0$ in the sense of Section \ref{main_thm}.    It  amounts to saying that $Z_{f,\mu}$ is a  $|\nu|^s$-equivariant meromorphic intertwining operator from $S(X)$ to $\chi$.  It is interesting to understand when $\mu$ can be extended to a $\chi$-invariant distribution on $X$.  It is equivalent to the vanishing of the class $\delta(\mu)$ in ${\rm Ext}^1_G(S(X_f), \chi)$. In Theorem \ref{Main},  the  conditions for the unextendability of $\mu$ as semi-invariant distribution to $X$ are exactly to make sure that assumption (2) in Theorem  \ref{Criterion_Non_Van} holds.  The main body of this paper is exactly to verify the assumption (2). 
 Up to this technicality,  we essentially re-proved Theorem \ref{Main} from the point of view of homological algebra. 
   
{\bf Acknowledgements}: 
We would like to thank D.\,Gourevitch for his careful reading and comments.  
S.\,Carmeli is partially supported by the Adams Fellowship of the Israeli Academy of Science,
and partially supported by ERC StG grant 637912, and ISF grant 249/17.

 \end{document}